\documentclass[12pt]{amsart}
\usepackage{amssymb,latexsym,amscd,verbatim,color}
\usepackage[T1]{fontenc}
\usepackage{mathrsfs} 
\usepackage{xy}
\usepackage{bbm} 
\xyoption{all}
\usepackage{ulem}
\hfuzz=2pt
\vfuzz=2pt
 
\usepackage{floatflt}
 
\usepackage[colorlinks,linkcolor=blue,citecolor=blue,urlcolor=black]{hyperref}

\setlength{\textheight}{23cm}
\setlength{\textwidth}{15.5cm}
\setlength{\oddsidemargin}{0.5cm}
\setlength{\topmargin}{-1cm}
\setlength{\evensidemargin}{\oddsidemargin}

\setcounter{tocdepth}{1}
\swapnumbers
\numberwithin{equation}{section}
\newtheorem{thm}{Theorem}[subsection]
\newtheorem{propose}[thm]{Proposition}
\newtheorem{lemma}[thm]{Lemma}
\newtheorem{cor}[thm]{Corollary}
\theoremstyle{definition}

\newtheorem{rem}[thm]{Remark}

 
\newcommand{\Def}{\mathrm{Def}(R,k)}
\newcommand{\M}{\mathcal{M}_{1}}

\newcommand{\spec}{\mathrm{Spec}} 
  
\newcommand{\ab}{\mathrm{ab}}

\newcommand{\Hom}{\operatorname{Hom}}      
\newcommand{\Ext}{\operatorname{Ext}}      

\newcommand{\mm}{\mathfrak{m}} 
\newcommand{\E}{\mathbb{E}}  

\newcommand{\Q}{\mathbb{Q}}     
\newcommand{\Z}{\mathbb{Z}}  
\newcommand{\N}{\mathbb{N}}
\newcommand{\G}{\mathbb{G}} 
  
\newcommand{\V}{\mathbb{V}}
\newcommand{\cB}{\mathcal{B}}  

\renewcommand{\ker}{\operatorname{Ker}}  
\newcommand{\by}[1]{\stackrel{#1}{\rightarrow}}
\newcommand{\longby}[1]{\stackrel{#1}{\longrightarrow}}

\newcommand{\et} {{\rm \acute{e}t}}   
\DeclareMathOperator{\gr}{gr}

\begin{document}
\title{On deformations of  $1$-motives}

\author{A. Bertapelle}
\address{Dipartimento di Matematica ``Tullio Levi-Civita'', Universit\`a   degli Studi di Padova\\ via Trieste, 63\\ I-35121 Padova, Italy} \email{alessandra.bertapelle@unipd.it}
\author{N. Mazzari }
\address{Institut de Math\'ematiques de Bordeaux\\
351, cours de la Lib\'eration\\ F-33405 Talence cedex \\ France}
\email{nicola.mazzari@u-bordeaux.fr}
\date{\today}
\keywords{$1$-motives, Barsotti-Tate groups}
\subjclass [2000]{14L15, 14L05, 14C15}
\begin{abstract}
According to a well-known theorem of Serre and Tate, the infinitesimal deformation theory of an abelian variety in  positive characteristic is equivalent to the infinitesimal deformation theory of its Barsotti-Tate group. We extend this result to $1$-motives.
\end{abstract}

\maketitle

 \section{Introduction}
Let $R$ be an artinian local ring with maximal ideal $\mm$ and  perfect  residue field $k$ of positive characteristic  $p$. 
Let $\M(R)$ denote  the category of $1$-motives over $R$. 
For any  $1$-motive $M$ (respectively Barsotti\,-Tate group $\cB$) over $R$   let $M_{0}$ (respectively $\cB_{0}$) denote its base change to $k=R/\mm$. 
Let  $\Def$ denote the category whose objects are triples $(M_{0}, \cB,\varepsilon_0)$ where $M_{0}$ is a $1$-motive over $k$, $\cB$ is a  Barsotti\,-Tate group over $R$ and $\varepsilon_{0}\colon \cB_{0}\to M_{0}[p^\infty]$ is an isomorphism of $\cB_0$ with the BT group of $M_0$. 
Aim of this paper is to prove the following 
\begin{thm}\label{t.st}
The functor 
\begin{eqnarray}\label{st}
\Delta_{R}\colon \M(R)&\to &\Def\\
\nonumber M&\mapsto & (M_{0}, M[p^\infty], \text{ natural } \varepsilon_0),
\end{eqnarray} is an equivalence of categories.  
\end{thm}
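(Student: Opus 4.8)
The plan is to prove that $\Delta_{R}$ is an equivalence by a d\'evissage along the weight filtration of a $1$-motive, reducing to the classical Serre\,-Tate theorem for the abelian part together with the rigidity of the pure parts of weight $0$ and $-2$. Recall that a $1$-motive $M=[L\xrightarrow{u}G]$ over $R$ carries its weight filtration, whose graded pieces are the lattice $L$ (weight $0$), the abelian scheme $A=G/T$ (weight $-1$) and the torus $T$ (weight $-2$); correspondingly $M[p^\infty]$ is a successive extension
\begin{equation*}
0\to T[p^\infty]\to G[p^\infty]\to A[p^\infty]\to 0,\qquad 0\to G[p^\infty]\to M[p^\infty]\to L\otimes\Q_p/\Z_p\to 0,
\end{equation*}
with graded pieces $\mu_{p^\infty}^{\dim T}$, $A[p^\infty]$ and $(\Q_p/\Z_p)^{\mathrm{rk}\,L}$. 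Since $\Delta_{R}$ respects these filtrations, it suffices to control the deformation of each graded piece and of the gluing data (the extension class of $G$ and the morphism $u$) separately.

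First I would record the rigidity of the extreme weights. Over the artinian local ring $R$ both tori and lattices (locally constant \'etale sheaves of finite free $\Z$-modules) are uniquely liftable from $k$, and the same holds for the BT groups $\mu_{p^\infty}$ and $\Q_p/\Z_p$. Hence the weight-$(-2)$ and weight-$0$ parts of $M$ are determined by $M_{0}$ on both sides and carry no deformation; all infinitesimal information is concentrated in the weight-$(-1)$ piece and in the connecting data. For the weight-$(-1)$ piece I would invoke the classical Serre\,-Tate theorem: $A\mapsto (A_{0},A[p^\infty])$ is an equivalence between deformations of $A_{0}$ and deformations of $A_{0}[p^\infty]$.

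The heart of the argument is the comparison of the gluing data. The extension class of $G$ is a point of the dual abelian scheme via the Barsotti\,-Weil identification $\Ext^1_{R}(A,\G_m)\cong A^\vee(R)$ (extended to a general torus by $\Biext$ functoriality), while the morphism $u$ amounts to lifting the points $u_{0}(e_i)\in G(k)$ to $G(R)$; dually these two data are precisely the two extension classes appearing in the weight filtration of $M[p^\infty]$. The key lemma I would prove is that, because $\mm$ is nilpotent and $k$ has characteristic $p$, the reduction kernel $\ker(\mathcal G(R)\to\mathcal G(k))$ of any smooth commutative (in particular semiabelian) group scheme $\mathcal G/R$ is a finite abelian $p$-group, hence is recovered from $\mathcal G[p^\infty]$. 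Applied to $\mathcal G=A^\vee$ and $\mathcal G=G$, this shows that lifting the extension class of $G$ and lifting $u$ are each governed entirely by the corresponding $p$-divisible extension data in $\cB$, bijectively and compatibly with $\varepsilon_{0}$.

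With these three inputs I would assemble the equivalence. For essential surjectivity, given $(M_{0},\cB,\varepsilon_{0})$ I would reconstruct $M/R$ weight by weight: take the rigid lifts of $L_{0}$ and $T_{0}$, deform $A_{0}$ via Serre\,-Tate from the abelian part of $\cB$, then use the comparison lemma to produce the unique lift of the extension defining $G$ and of the morphism $u$ matching the extension structure of $\cB$ under $\varepsilon_{0}$. For full faithfulness, a morphism of $1$-motives is a pair of compatible maps on $L$ and $G$, hence on the graded pieces; rigidity and Serre\,-Tate pin down the graded parts, and the $p$-group lemma shows that the remaining constraints (compatibility with $u$ and with the extension) are determined by, and realizable from, the induced morphism of BT groups. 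I expect the main obstacle to be the comparison lemma and, more delicately, its compatibility across the whole filtration: one must verify that the identifications of the extension and morphism data are functorial and interact correctly with the simultaneous deformation of $A$, so that the biextension pairings and the Serre\,-Tate identification are matched and the reconstruction is canonical rather than merely existing piece by piece.
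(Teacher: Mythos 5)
Your outline reproduces the correct global architecture (d\'evissage along the weight filtration, rigidity of the lattice and torus, classical Serre\,-Tate in weight $-1$, then comparison of the gluing data), which is indeed how the paper proceeds, but the proposal has a genuine gap precisely at its declared heart. First, your key lemma is false as stated: for $k$ infinite the reduction kernel $\ker(\mathcal G(R)\to \mathcal G(k))$ is \emph{not} finite --- already $\ker(\G_m(R)\to\G_m(k))=1+\mm$ is a successive extension of finite-dimensional $k$-vector spaces, hence infinite; the correct statement is only that it is killed by a power of $p$ (Remark \ref{rmbis}(a)). More seriously, even the corrected statement (bounded $p$-torsion, hence $\ker(\mathcal G(R)\to\mathcal G(k))=\ker(\mathcal G[p^N](R)\to\mathcal G[p^N](k))$, functorially in $\mathcal G[p^\infty]$) does not yield your assertion that lifting $u$ and lifting the extension class of $G$ are ``governed entirely \dots\ bijectively'' by the $p$-divisible extension data. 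That bijectivity is exactly the nontrivial content: one needs Messing's isomorphism $1+\mm\simeq \mathrm{Ext}_R(\Q_p/\Z_p,\mu_{p^\infty})$ and then a genuine verification, with pull-back/push-out and Cartier-duality bookkeeping, that the natural map $u\mapsto M^u[p^\infty]$ realizes this isomorphism rather than some other homomorphism (the paper's Lemma \ref{l.tm}, via Lemma \ref{l.psi} and Remarks \ref{rm}, \ref{r.push} --- the case $[\Z\to\G_m]$ ``orthogonal'' to the classical one). Without it, both the injectivity step in fullness (that $w_0=0$ together with splitness of $\eta_{Z[p^\infty]}$ forces $w=0$, Lemma \ref{l.help}) and the correction step in essential surjectivity (altering $u'$ by a map into $T$, Lemma \ref{l.lz}) are unsupported; your closing sentence honestly flags this as ``the main obstacle,'' but the reduction-kernel observation you offer is not a mechanism for overcoming it.

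Two further steps are missing before your graded-piece analysis can even start. (i) In $\Def$ the datum $\cB$ is a bare BT group with an isomorphism $\varepsilon_0$ over $k$: the weight filtration on an arbitrary lifting $\cB$, and the fact that any lifting of a morphism of BT groups respects it, must be \emph{constructed} (the paper's Lemma \ref{l.fil}, using the connected--\'etale sequence and Cartier duality for the toric part); you assume filtered data throughout. (ii) Your pointwise arguments (lifting ``the points $u_0(e_i)$,'' Barsotti\,-Weil against $\G_m$) presuppose $L$ constant and $T$ split; in general one must descend, and descent of the constructed lift is not formal --- descending the scheme $G'$ requires a covering by Galois-stable affines, and descending the motive uses the full faithfulness already established (Subsection \ref{gal}, Lemma \ref{l.desc}). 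Relatedly, applying classical Serre\,-Tate only to $A=G/T$ and then lifting the extension class of $G$ ``compatibly'' is the delicate simultaneous-deformation issue you yourself flag; the paper circumvents it by proving the semi-abelian case as a whole (Theorem \ref{t.stab}) via Cartier duality, turning $G_0$ into a $1$-motive $[\Z^d\to A_0^*]$, rather than by a pure graded-piece argument. So the strategy is salvageable and close in spirit to the paper's, but as written the proof of its central comparison is absent and its supporting lemma is incorrect.
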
 
This result generalizes the well-known Serre\,-Tate equivalence for abelian schemes  over artinian local rings $R$ as above (cf. \cite[Theorem 1.2.1]{Ka},  \cite[V, Theorem 2.3]{M}). 
Arguments in {\it loc. cit.} do not extend directly to the case of $1$-motives (neither to the case of semi-abelian varieties, see Theorem \ref{t.stab}) but are used to get some intermediate results.
Note that the proof of fullness (Proposition \ref{full}) and essential  surjectiveness (Proposition \ref{surj}) uses weights, a carefully study the case of $1$-motives of the form $[\Z\to \G_m]$ (Lemma \ref{l.tm}), which is orthogonal to the classical case of abelian schemes, and Galois descent arguments (Lemma \ref{l.desc}).

As a consequence, we can extend in Section \ref{moduli} the description of the formal moduli space of an abelian variety with ordinary reduction over an algebraically closed field to $1$-motives.

\subsection*{Notation} Let   $p^s,  s\geq 1$, denote the characteristic of $R$. Then $R$ is canonically endowed with a structure of finite $W_s(k)$-algebra,  where $W_s(k)=W(k)/(p^s)$ denotes the ring of Witt vectors of length $s$. Let $s' \geq s$ be the minimal integer such that $\mm^{s'}=0$.
We also fix a positive integer $n$ such that $(1+\mm)^{p^n}=\{1\}$. For any $R$-group scheme $G$ we will identify $G(R)=\mathrm{Hom}_R(\spec\, R,G)$ with $\mathrm{Hom}_{R\text{-gr}}(\Z,G)$ by mapping $a\in G(R)$ to the morphism $u\colon \Z\to G$ such that $u(1)=a$.
\section{Barsotti\,-Tate groups and $1$-motives}
Let $\M(R)$ be the category of $1$-motives over $R$. Its  objects are two term complexes of commutative $R$-group schemes  $M=[u\colon L\to G]$ where $G$ is extension of an abelian scheme $A$ by a torus $T$, and $L$ is locally for the \'etale topology on $R$  isomorphic to $\Z^r$.  Recall that a $1$-motive is naturally filtered as follows 
\[
 0\subseteq W_{-2}M=[0\to T]\subseteq W_{-1}M=[0\to G]\subseteq W_0M=M
	\ .
\]
Since morphisms of $1$-motives respect filtrations,   $\M(R)$ is a filtered category. We will denote by  $\M(R)_{\le i}$  the full subcategory of $\M(R)$ consisting of $1$-motives such that $M=W_iM$.
Given a $1$-motive $M$, let  $M_\ab=[u_\ab\colon L\to A]$, where $u_\ab$ is the composition of $u$ with the canonical morphism $G\to A$. 
Let $\M(R)_{\geq-1}$ be the full subcategory of $\M(R)$ consisting of $1$-motives $M=M_\ab$.  Similarly $\Def_{\geq-1}$ (respectively  $\Def_{\leq-1}$) denotes the full subcategory of $\Def$ consisting of objects such that $M_0$ is in $\M(k)_{\geq-1}$ (respectively, in  $\M(k)_{\leq-1}$).
\smallskip

Any  $1$-motive $M=[u\colon L\to G]$ over $R$ admits a so-called universal extension $M^\natural=[u^\natural\colon L\to G^\natural]$ which fits into a short exact sequence of complexes  of $R$-group schemes 
\begin{equation}
\label{vmm}
0\to [0\to \V(M)]\to M^\natural\to M\to 0\end{equation} 
where $\V(M)$ is the vector group associated to the dual sheaf of $\underline{\mathrm{Ext}}^1_{\mathrm{Zar}}(M,\G_{a,R})$. 
Note that $\V(M)$ is killed by $p^s$ since the  multiplication by $p^s$ morphism is the $0$ map on $\G_{a,R}$. We recall that $M^\natural$ is denoted by $\E(M)=[L\to \E(M)_G]$ in \cite{AB,ABV}.
 \smallskip
 
For any $m\in \N$ consider the cone of the multiplication by $m$ on $M$
\[M/ mM\colon \quad  L\overset{{-u}\choose{-m}}{\longrightarrow} G\oplus L\overset{(-m,u)}{\longrightarrow} G,\]  
where $L$ is in degree $-2$, and let $M[m]=H^{-1}(M/ mM)$ (see \cite[10.1.4]{De} or \cite[\S 1.3]{ABV} with a different sign convention); it is a  finite and flat $R$-group scheme and it fits in the following diagram
\begin{equation} \label{dia.big}
\xymatrix@R=0.6cm{
&&& L\ar@{=}[r]\ar@{^{(}->}[d]^{{-u}\choose{-m}}& L \ar@{^{(}->}[d]^{-m}& 
\\
\tilde \eta_{M[m]}\colon& 0\ar[r]& G[m] \ar[r]\ar@{=}[d]&    \ker((-m,u)) \ar@{->>}[d]\ar[r]& L \ar[r]\ar@{->>}[d]& 0
\\
\eta_{M[m]}\colon& 0\ar[r]& G[m] \ar[r]&  M[ m] \ar[r]& L/m L \ar[r]& 0
} \end{equation}

\begin{rem}\label{rm} 
One checks by direct computations that the pull-back of  
\[\xi_{G[m]}\colon \quad 0\to G[m]\to G\by{m} G\to 0
\] along $u$ is isomorphic up to sign to $\tilde \eta_{M[m]} $. 
\end{rem} 

Since the push-out of $\tilde\eta_{M[p^r]}$ along $G[p^r]\to G[p^{r+1}]$ is isomorphic to $p\cdot \tilde \eta_{M[p^{r+1}]}$, there is  a morphism of complexes $\eta_{M[p^r]}\to \eta_{M[p^{r+1}]}$ for any $r\geq 1$. One then gets  a short exact sequence of  Barsotti\,-Tate groups (BT groups for short) 
\begin{equation}\label{e.pi}
\eta_{M[p^\infty]}\colon \quad 0\to G[p^\infty]\to M[p^\infty]\to L[p^\infty]\to 0,\end{equation}
where $L[p^\infty]:=L\otimes_\Z\Q_p/\Z_p$.
 
\begin{lemma}\label{l.fil} Let  $M_0=[u_0\colon L_0\to G_0]$ and $\varphi_0\colon M_0\to N_0$  a morphism of $1$-motives over $k$.  
\begin{enumerate}
\item[(i)] Any finite and flat $R$-group scheme $M[m]$ which lifts $M_0[m]$ is naturally endowed with a weight filtration which lifts the weight filtration $T_0[m]\subseteq G_0[m]\subseteq M_0[m]$. If $M_0$ lifts to a $1$-motive $M$ over $R$,  then the natural weight filtration on $M[m]$ agrees with the one induced by the weight filtration on $M$.
\item[(ii)] Any morphism of finite flat $R$-group schemes $M[m]\to N[m]$ which lifts $\varphi_0[m]$ respects filtrations. 
\item[(iii)] Any  BT group over $R$ which lifts $M_0[p^\infty]$ is naturally endowed with a weight filtration and any morphism of BT groups over $R$ which lifts $\varphi_0[p^\infty]$ respects filtrations.
\end{enumerate}
\end{lemma}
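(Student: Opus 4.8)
The plan is to produce the weight filtration on a lift by a repeated use of the connected-étale sequence together with the rigidity of finite étale group schemes over the Henselian local ring $R$, and then to reach the $W_{-2}$-step by Cartier duality. Throughout I write $H$ for the given finite flat lift of $M_0[m]$ and $H^0\subseteq H$ for its connected component, so that $0\to H^0\to H\to H^{\et}\to 0$ is the connected-étale sequence and $H^{\et}$ is finite étale over $R$.

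For (i), first I would construct $W_{-1}H$. Since $M_0[m]/G_0[m]=L_0/mL_0$ is étale, the special fibre $H^0_0=M_0[m]^0$ is contained in $G_0[m]$, so $G_0[m]/H^0_0$ is an étale subgroup of $H^{\et}_0$. Using that base change from finite étale $R$-group schemes to finite étale $k$-group schemes is an equivalence, this subgroup lifts uniquely to an étale subgroup $\bar W\subseteq H^{\et}$, and I would set $W_{-1}H$ to be its preimage in $H$; it is finite flat, reduces to $G_0[m]$, and is the unique such lift, because any finite flat lift of $G_0[m]$ must contain $H^0$ (its intersection with $H^0$ has full special fibre, hence equals $H^0$ by flatness) and is thereby determined by its image in $H^{\et}$. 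Next, to obtain $W_{-2}H\subseteq W_{-1}H$ lifting $T_0[m]$, I would dualise: in the Cartier dual $(W_{-1}H)^\vee$, which lifts $G_0[m]^\vee$, the quotient $T_0[m]^\vee$ is now étale, so the very same argument lifts $A_0[m]^\vee=\ker(G_0[m]^\vee\to T_0[m]^\vee)$ uniquely to a subgroup of $(W_{-1}H)^\vee$; dualising back gives a unique quotient of $W_{-1}H$ whose kernel $W_{-2}H$ is finite flat and lifts $T_0[m]$. Finally, if $M_0$ lifts to $M$ over $R$, the subgroups $T[m]\subseteq G[m]\subseteq M[m]$ coming from the weight filtration of $M$ lift $T_0[m]\subseteq G_0[m]\subseteq M_0[m]$, so by the uniqueness just established they coincide with the canonical filtration.

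For (ii), let $\psi\colon H\to H'$ lift $\varphi_0[m]$. To see $\psi(W_{-1}H)\subseteq W_{-1}H'$ I would consider the composite $W_{-1}H\hookrightarrow H\xrightarrow{\psi}H'\twoheadrightarrow H'/W_{-1}H'$: its target is étale, and any homomorphism to an étale $R$-group factors through the maximal étale quotient and is therefore determined by its special fibre; since $\varphi_0$ respects weights this special fibre is $0$, whence the composite vanishes. The inclusion $\psi(W_{-2}H)\subseteq W_{-2}H'$ is then equivalent, via the annihilator formalism $\psi(W)\subseteq W'\Leftrightarrow\psi^\vee((W')^\perp)\subseteq W^\perp$ and the identification of $(W_{-2})^\perp$ with the canonical $W_{-1}$ of the dual, to the $W_{-1}$-statement already proven for $\psi^\vee\colon (H')^\vee\to H^\vee$. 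Part (iii) follows by applying (i) and (ii) level by level to $\mathcal B[p^r]$: the uniqueness in (i) guarantees that the filtrations are compatible with the transition maps, so they assemble to a weight filtration $T[p^\infty]\subseteq G[p^\infty]\subseteq\mathcal B$ lifting that of \eqref{e.pi}, and (ii) shows that morphisms of BT groups respect it.

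The main obstacle is the $W_{-2}$-step: unlike in the classical abelian-variety case, $T_0[m]$ is genuinely smaller than the maximal subgroup of multiplicative type of $M_0[m]$ (already for ordinary $A_0$ the piece $A_0[p]$ contributes a multiplicative part), so one cannot characterise $W_{-2}H$ as that maximal subgroup; the Cartier-duality trick, which converts the ``torus'' step into an ``étale-quotient'' step to which rigidity applies, is what makes the lift canonical. A secondary point to check is the prime-to-$p$ part, which is étale and for which everything above is immediate by rigidity, so in effect the argument reduces to the case $m=p^r$.
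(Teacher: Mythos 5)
Your proposal is correct and takes essentially the same approach as the paper: both constructions rest on rigidity of \'etale group schemes over the artinian local ring $R$ (unique lifting of maps to \'etale targets, via the connected--\'etale sequence) to produce $W_{-1}$, use Cartier duality to reach the $W_{-2}$-step, prove (ii) by the ``zero special fibre $\Rightarrow$ zero map to an \'etale target'' argument plus duality, and deduce (iii) by passing to the limit over the $M[p^r]$. The only cosmetic differences are that the paper realises $G[m]$ as the kernel of the unique lift of $M_0[m]\to L_0/mL_0$ rather than as a preimage in $H^{\et}$, and dualises all of $M[m]$ (then checks $T[m]\subseteq G[m]$ by rigidity) where you dualise $W_{-1}H$ directly.
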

\begin{proof}
Let $L$ be the unique \'etale lifting of $L_0$ over $R$. Let $M[m]$ be  a finite and flat $R$-group scheme lifting $M_0[m]$. Then $M[m]$ fits into a short exact sequence
\[0\to G[m]\to M[ m]\to L/m L\to 0\] 
which lifts the sequence $\eta_{M_0[m]}$   for $M_0$ (cf. \eqref{dia.big}). 
Indeed, there exists a unique lifting $f\colon M[ m]\to L/m L$ of $M_0[m]\to L_0/m L_0$ since $L/mL$ is \'etale and we set $G[m]=\ker f$. Note further that $f$ factors through the epimorphism  $\pi_M\colon M[m]\to M[m]^\et=M[m]/M[m]^\circ$ with $ M[m]^\circ$  the identity component of $M[m]$. Let $f^\et\colon M[m]^\et\to L/m L$ satisfy $f^\et\circ \pi_M=f$. Since $\pi_M$ is an fppf epimorphism, it suffices to prove that the same is $f^\et$. This follows immediately since   $f^\et_0$  is an epimorphism of finite \'etale $k$-group schemes.   

Note further that $M[m]$ is naturally filtered. Indeed, let $T$ be the unique torus lifting $T_0$, $L^*$ its group of characters  and $M[m]^*$ the Cartier dual of $M[m]$. We have seen that the canonical morphism $M_0[m]^*\to L_0^*/mL^*_0$  lifts uniquely over $R$ and hence, by Cartier duality, the immersion  $T_0[m]\to  M_0[m]$ lifts uniquely over $R$. Now, the composition $T[m]\to  M[m]\to L/mL$ is the zero map, since its reduction modulo $\mm$ is the zero map and $L/mL$ is \'etale. Hence the closed immersion  $T [m]\to  M [m]$ factors through $G[m]$ and (i) is proved.

With similar arguments one shows (ii).  Finally   (iii) follows immediately from (i) and (ii).
\end{proof}

The classical Serre\,-Tate theorem says that   deformations of an abelian variety over $k$ only depend on   deformations of its  BT group. As we will now explain, the analogous result for $1$-motives of the type $[\Z\to \G_{m,k}]$ can  explicitly be deduced from the canonical  isomorphism 
\begin{equation}\label{e.rkm}
R^*\longby{\sim} k^*\times (1+\mm), \quad x\mapsto (x_0, x/[x_0]),
\end{equation}
where $x_0$ is the reduction of $x$ modulo $\mm$ and $[x_0]\in W_{s}(k)$ is the multiplicative representative of $x_0$. Recall that  $(1+\mm)^{p^n}=\{1\}$ and hence  \[1+\mm=\mu_{p^n}(R)=\mu_{p^\infty}(R).\]

We now consider BT groups which are extensions of    $\Q_p/\Z_p$  by  $\mu_{p^\infty}$. 
By \cite[Proposition 2.5 p. 180]{M} there exists an isomorphism 
\begin{equation}\label{e.psi}
\Psi\colon 1+\mm\by{\sim}  \mathrm{Ext}_R(\Q_p/\Z_p,\mu_{p^\infty})
\end{equation}  
which associates to $a\in 1+\mm$ the  push-out along $a\colon \Z\to \mu_{p^\infty} $ of the sequence \[0\to \Z\to \varinjlim_m \frac 1{p^m}\Z\to \Q_p/\Z_p\to 0 .\]
Let $\mathrm{Ext}_{\Z/p^n\Z}(-,-)$ denote classes of extensions of  $R$-group schemes killed by $p^n$.

\begin{lemma}\label{l.psi}
There is an isomorphism of groups $\Psi_n\colon 1+\mm\by{\sim} \mathrm{Ext}_{\Z/p^n\Z}(\Z/p^n\Z, \mu_{p^n})$ which associates to $a\in 1+\mm$ the short exact sequence $\Psi(a)[p^n]$, i.e.,  the short exact sequence of kernels of the multiplication by $p^n$ associated  to $\Psi(a)$.
\end{lemma}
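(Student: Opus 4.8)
The plan is to factor $\Psi_n$ as the composite of the given isomorphism $\Psi$ of \eqref{e.psi} with the truncation map $\tau\colon\mathrm{Ext}_R(\Q_p/\Z_p,\mu_{p^\infty})\to\mathrm{Ext}_{\Z/p^n\Z}(\Z/p^n\Z,\mu_{p^n})$ sending an extension $E$ to $E[p^n]$, and then to prove that $\tau$ is an isomorphism. First I would check that $\tau$ is well defined: applying the snake lemma to multiplication by $p^n$ on a sequence $0\to\mu_{p^\infty}\to E\to\Q_p/\Z_p\to 0$ and using that $\mu_{p^\infty}$ is $p$-divisible, one gets a short exact sequence $0\to\mu_{p^n}\to E[p^n]\to\Z/p^n\Z\to 0$ with $E[p^n]$ killed by $p^n$, so $\tau(E)=E[p^n]$ indeed lies in $\mathrm{Ext}_{\Z/p^n\Z}(\Z/p^n\Z,\mu_{p^n})$. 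Additivity of $(-)[p^n]$ (again via $p$-divisibility, which makes it commute with the pushouts occurring in the Baer sum) shows that $\tau$, hence $\Psi_n=\tau\circ\Psi$, is a homomorphism of groups. Since $\Psi$ is an isomorphism it then suffices to prove that $\tau$ is bijective.

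For injectivity I would use the explicit description of $\Psi(a)$ as the pushout $a_*\mathcal E$ of the fixed sequence $\mathcal E\colon 0\to\Z\to\varinjlim_m\frac1{p^m}\Z\to\Q_p/\Z_p\to 0$ along $a\colon\Z\to\mu_{p^\infty}$, $1\mapsto a$. A direct computation identifies $\Psi(a)[p^n]$ with the extension whose sections over $1\in\Z/p^n\Z$ are the pairs $(1/p^n,\mu)$ with $\mu^{p^n}=a^{-1}$; since $\Psi(a)[p^n]$ is killed by $p^n$, it splits precisely when such a pair is defined over $R$, i.e. when $a$ admits a $p^n$-th root in $\mu_{p^\infty}(R)=1+\mm$. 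By the standing hypothesis $(1+\mm)^{p^n}=\{1\}$ this happens only for $a=1$, so $\ker\Psi_n=\{1\}$.

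For surjectivity I would identify the target by Kummer theory. Pushing an extension $F$ of $\Z/p^n\Z$ by $\mu_{p^n}$ out along $\mu_{p^n}\hookrightarrow\G_m$ produces an extension $F'$ of $\Z/p^n\Z$ by $\G_m$; since $\mathrm{Pic}(R)=0$ the pullback of $F'$ along $\Z\to\Z/p^n\Z$ splits, and evaluating a global splitting at $p^n$ yields a well defined class $\beta(F)\in R^*/(R^*)^{p^n}$, which by \eqref{e.rkm} is canonically $1+\mm$. One checks that $F=F'[p^n]$, which forces $\beta$ to be injective, and $\beta$ is visibly surjective; thus $\beta$ is an isomorphism $\mathrm{Ext}_{\Z/p^n\Z}(\Z/p^n\Z,\mu_{p^n})\by{\sim}1+\mm$. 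Finally a computation with the pushout $a_*\mathcal E$ shows $\beta\circ\Psi_n=\mathrm{id}_{1+\mm}$ (up to the sign coming from the chosen orientation of $\mathcal E$), whence $\Psi_n$ is an isomorphism.

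The main obstacle is the computation of the target group $\mathrm{Ext}_{\Z/p^n\Z}(\Z/p^n\Z,\mu_{p^n})$: one must work inside the category of group schemes killed by $p^n$ rather than in all fppf sheaves, where the analogous Ext is strictly larger (an extension of $1+\mm$ by $1+\mm$). Keeping track of the ``killed by $p^n$'' constraint --- equivalently, the surjectivity of $\tau$ and the identity $F=F'[p^n]$ that makes $\beta$ injective --- is the delicate point; once the Kummer isomorphism $\beta$ is in place, the verification $\beta\circ\Psi_n=\mathrm{id}$ is routine but requires care with signs and with the normalization of the multiplicative representative in \eqref{e.rkm}.
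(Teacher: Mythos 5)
Your proposal is correct, but it takes a genuinely different route from the paper. The paper's proof is essentially a citation: it invokes \cite[p.~183]{M} for the chain of isomorphisms \eqref{e.4e}, namely $\mathrm{Ext}_R(\Q_p/\Z_p,\mu_{p^\infty})\simeq\varprojlim_m\mathrm{Ext}_R(\Z/p^m\Z,\mu_{p^\infty})\simeq\mathrm{Ext}_R(\Z/p^n\Z,\mu_{p^\infty})\simeq\mathrm{Ext}_{\Z/p^n\Z}(\Z/p^n\Z,\mu_{p^n})$ (the middle step using that $p^n$ kills the Ext group), and defines $\Psi_n$ as $\Psi$ followed by this chain --- whose composite is exactly your truncation $\tau$, since $(\iota_n^*E)[p^n]=E[p^n]$. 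You instead give a self-contained argument: injectivity of $\Psi_n$ directly from the standing hypothesis $(1+\mm)^{p^n}=\{1\}$ via the explicit pushout description of $\Psi(a)$, and bijectivity by constructing an explicit inverse $\beta$ through Kummer theory, using $\mathrm{Pic}(R)=0$ and the identification $R^*/(R^*)^{p^n}\simeq 1+\mm$ --- note that the latter needs $k$ perfect, so that $k^*$ is $p$-divisible; citing \eqref{e.rkm} alone does not give it, though this is the same fact the paper uses in Lemma~\ref{l.tm}. Notably, your argument never actually uses that $\Psi$ is an isomorphism, so it in effect reproves the $p^n$-torsion part of \eqref{e.psi}; what it buys is an explicit inverse and a transparent explanation of where the ``killed by $p^n$'' constraint enters (your identity $F=F'[p^n]$, which indeed holds by the snake lemma and the five lemma), at the cost of being much longer than the paper's appeal to Messing. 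Two cosmetic wrinkles: after announcing that it suffices to prove $\tau$ bijective, you abandon that reduction and prove $\Psi_n$ bijective directly (fine, and all the lemma needs); and ``$\beta$ is visibly surjective'' is not visible at that point --- surjectivity of $\beta$ is exactly the later identity $\beta\circ\Psi_n=\mathrm{id}$, computed on $\widetilde{\Psi}(a)$ as in Remark~\ref{r.push}, so those two claims should be stated in the opposite order.
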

\begin{proof}
By \cite[p.183]{M} we have isomorphisms
\begin{multline}\label{e.4e} \mathrm{Ext}_R(\Q_p/\Z_p,\mu_{p^\infty}) \simeq
\varprojlim_m \mathrm{Ext}_R(\Z/p^m\Z ,\mu_{p^\infty})\simeq \\ \mathrm{Ext}_R(\Z/p^n\Z  ,\mu_{p^\infty})\simeq  \mathrm{Ext}_{\Z/p^n\Z}(\Z/p^n\Z, \mu_{p^n}),  \end{multline} 
where the first isomorphism is induced by pull-back  along $\Z/p^m\Z\to \Q_p/\Z_p$, the second isomorphism follows since $p^n$ kills $ \mathrm{Ext}_R(\Q_p/\Z_p,\mu_{p^\infty})$, and the third isomorphism is obtained by passing to kernels of the multiplication by $p^n$. By definition $\Psi_n$ is the composition of the isomorphism $\Psi$ in \eqref{e.psi} with the isomorphisms in \eqref{e.4e}. Hence the conclusion follows.
\end{proof} 

\begin{rem}\label{r.push}  Note that $\Psi_n(a)$ is also  represented by the sequence of kernels of the multiplication by $p^n$ of the sequence  $\widetilde{\Psi}(a) $ obtaied via push-out along $a\colon \Z\to \G_{m,R}$ from $\zeta \colon 0\to \Z\to  \Z\to \Z/p^n\Z\to 0 $. Indeed,
$\widetilde{\Psi}(a) =\iota_{n}^*\iota_*\Psi(a)$ with  $\iota_n\colon \Z/p^n\Z\to \Q_p/\Z_p$ and $\iota\colon \mu_{p^\infty}\to \G_{m,R}$ the canonical morphisms.
\end{rem}

We now see how to interpret \eqref{e.rkm} in terms of deformations of $1$-motives.

\begin{lemma}\label{l.tm}
Given a $1$-motive $M_0=[u_0\colon \Z\to \G_{m,k}]$ and a $\mathcal B\in  \mathrm{Ext}_R(\Q_p/\Z_p,\mu_{p^\infty})$ there is a unique $1$-motive $M=[u\colon \Z\to \G_{m,R}]$  which lifts $M_0$ and whose BT group is isomorphic to  $\mathcal B$ as extension of $\mu_{p^\infty} $ by $\Q_p/\Z_p$.
\end{lemma}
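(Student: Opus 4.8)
The plan is to reduce the statement to the two group isomorphisms already at our disposal, \eqref{e.rkm} and \eqref{e.psi}, and to check that the passage $M\mapsto M[p^\infty]$ matches them up. First I would record that a lift $M=[u\colon\Z\to\G_{m,R}]$ of $M_0$ is nothing but a lift $a:=u(1)\in\G_m(R)=R^*$ of $a_0:=u_0(1)\in k^*$: indeed the lattice $\Z$ and the torus $\G_{m,R}$ are the unique liftings of $\Z$ and $\G_{m,k}$, so only the attaching map $u$, i.e. the unit $a$, is free to vary (here I use the identification $\mathrm{Hom}_{R\text{-gr}}(\Z,\G_m)=\G_m(R)$ of the Notation). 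Writing $a=[a_0]\cdot b$ with $b\in 1+\mm$ as in \eqref{e.rkm}, the liftings of $M_0$ are thus in bijection with $1+\mm$, the choice $b=1$ giving the ``canonical'' lift $u(1)=[a_0]$. On the other side, \eqref{e.psi} identifies $\mathrm{Ext}_R(\Q_p/\Z_p,\mu_{p^\infty})$ with $1+\mm$ as well. The whole point will be to prove that, under these two identifications, the map $M\mapsto M[p^\infty]$ becomes $b\mapsto b$ up to inversion, hence a bijection; this yields at once existence and uniqueness of the required lift.

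To compute the class of $M[p^\infty]$ I would work at finite level. By the isomorphisms \eqref{e.4e} the class of the extension \eqref{e.pi} in $\mathrm{Ext}_R(\Q_p/\Z_p,\mu_{p^\infty})$ is detected by the class of $\eta_{M[p^n]}$ in $\mathrm{Ext}_{\Z/p^n\Z}(\Z/p^n\Z,\mu_{p^n})$, for $n$ the fixed integer with $(1+\mm)^{p^n}=\{1\}$. I would next verify, reading $\eta_{M[p^n]}$ off from the cone description \eqref{dia.big} (equivalently, from Remark \ref{rm}), that $\eta_{M[p^n]}$ is, up to sign, the kernel of the multiplication by $p^n$ of the push-out of $\zeta\colon 0\to\Z\to\Z\to\Z/p^n\Z\to 0$ along $u\colon\Z\to\G_{m,R}$, exactly the construction appearing in Remark \ref{r.push}. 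Now the push-out of $\zeta$ along $u$ has class the Kummer class of $a$ in $\mathrm{Ext}_R(\Z/p^n\Z,\G_{m,R})\cong R^*/(R^*)^{p^n}$. Here the residue field being perfect does the work: since the Teichm\"uller lift is multiplicative, $[a_0]=[a_0^{1/p^n}]^{p^n}\in(R^*)^{p^n}$, while $(1+\mm)^{p^n}=\{1\}$; hence \eqref{e.rkm} descends to $R^*/(R^*)^{p^n}\by{\sim}1+\mm$ and the class of $a=[a_0]\cdot b$ is simply $b$. Taking kernels of the multiplication by $p^n$ and invoking Remark \ref{r.push} for $b\in 1+\mm$, the class of $\eta_{M[p^n]}$ is $\Psi_n(b)$ up to sign, so that $M[p^\infty]\cong\Psi(b)$ in $\mathrm{Ext}_R(\Q_p/\Z_p,\mu_{p^\infty})$, again up to inversion of $b$.

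With this in hand the lemma follows formally. Given $\mathcal B\in\mathrm{Ext}_R(\Q_p/\Z_p,\mu_{p^\infty})$, there is by \eqref{e.psi} a unique $b\in 1+\mm$ with $\Psi(b)=\mathcal B$ (after the harmless sign normalisation above); the lift $M$ defined by $u(1)=[a_0]\cdot b$ then reduces to $M_0$ modulo $\mm$, because $[a_0]\equiv a_0$ and $b\equiv 1$, and has $M[p^\infty]\cong\mathcal B$ by the previous paragraph. Conversely any lift is determined by its unit $a=[a_0]\cdot b'$, and the condition $M[p^\infty]\cong\mathcal B$ forces $b'=b$ since $\Psi$ is injective, proving uniqueness. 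The one genuine point, and the step I expect to be most delicate, is the finite-level identification of $\eta_{M[p^n]}$ with the push-out class of Remark \ref{r.push} together with the vanishing of the Teichm\"uller contribution $[a_0]$ in $R^*/(R^*)^{p^n}$; the signs that appear along the way are immaterial, as $\Psi$ is an isomorphism of groups and only the bijectivity of $b\mapsto M[p^\infty]$ is needed.
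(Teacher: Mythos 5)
Your proposal is correct, and its skeleton coincides with the paper's: both encode a lifting of $M_0$ as a unit $a=[a_0]\cdot b$ via the Teichm\"uller decomposition \eqref{e.rkm}, kill the contribution of $[a_0]$ using $p^n$-th roots (which exist because $k$ is perfect and $\mathrm{Ext}_R(\Q_p/\Z_p,\mu_{p^\infty})$ is killed by $p^n$), pass to level $p^n$ via \eqref{e.4e}, and conclude by comparing with the isomorphism $\Psi_n$ of Lemma \ref{l.psi}. Where you genuinely diverge is in how the central identification of $b\mapsto \eta_{M[p^n]}$ with $\pm\Psi_n$ is established. The paper goes through Cartier duality: it rewrites $u\in 1+\mm$ as a morphism $\Z/p^n\Z\to\G_{m,R}$, uses Remark \ref{rm} to present $\eta_{M^u[p^n]}$ as the cokernels of multiplication by $p^n$ on the pull-back of the Kummer sequence along $u$, and dualizes to obtain $\Phi(u)^D=-\Psi_n(u^D)$, concluding since duality is an isomorphism on both $1+\mm=\Hom_{R\text{-gr}}(\Z/p^n\Z,\mu_{p^n})$ and the Ext group. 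You instead avoid duality entirely: you identify $\eta_{M[p^n]}$ up to sign with the $p^n$-kernels of the push-out $u_*\zeta$ --- exactly the construction of Remark \ref{r.push} --- and factor the class map through the Kummer isomorphism $\mathrm{Ext}_R(\Z/p^n\Z,\G_{m,R})\simeq R^*/(R^*)^{p^n}$, where $[a_0]$ visibly dies. The step you flag as delicate is indeed the only thing requiring proof, and it does hold: writing the push-out middle term as $(\G_{m,R}\oplus\Z)/\Z$ with $\Z$ embedded antidiagonally via $(u^{-1},p^n)$, its $p^n$-torsion is $\ker((-p^n,u))$ modulo the embedded copy of $\Z$, up to inverting the first coordinate, which is precisely $M[p^n]$ from \eqref{dia.big}; this is a direct cone computation of the same kind the paper relegates to Remark \ref{rm}, so there is no gap. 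Your route is somewhat more self-contained (no dualization, and the sign/inversion ambiguities are harmless since only bijectivity of $b\mapsto M[p^\infty]$ is used), at the price of carrying out that computation by hand, while the paper's duality argument buys verbatim reuse of Remark \ref{rm}; your uniqueness argument via injectivity of $\Psi$ and rigidity of the lifts of $\Z$ and $\G_{m}$ likewise substitutes for the paper's opening observation that two liftings of $M_0[p^\infty]$ admit at most one morphism as extensions.
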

\begin{proof}
Note that if $\mathcal B,\mathcal B'$ are two  liftings of $M_0[p^\infty]$  there is at most one morphism $\mathcal B\to \mathcal B'$ as extensions of $\Q_p/\Z_p$  by  $\mu_{p^\infty}$.

We have to prove that the homomorphism \[
\Phi\colon R^*\to k^*\times \mathrm{Ext}_R(\Q_p/\Z_p,\mu_{p^\infty}),\] which maps $u\in R^*$ to the pair $(u_0,M^u[p^\infty])$, where $u_0$ is the reduction of $u$ modulo $\mm$ and $M^u=[u\colon \Z\to \G_{m,R}]$, is an isomorphism. 
Note that if  $u=[u_0], u_0\in k^*$, then $u$  admits $p^r$th roots for any $r\geq 1$; hence   $\eta_{M^u[p^\infty]} $ is split since by \eqref{e.psi} $\mathrm{Ext}_R(\Q_p/\Z_p,\mu_{p^\infty}) $  is killed by $p^n$. We are then reduced  to checking  that 
\[\Phi_{|1+\mm}\colon 1+\mm\to \mathrm{Ext}_R(\Q_p/\Z_p,\mu_{p^\infty}),\]
is an isomorphism. Thanks to \eqref{e.4e} we may view it as  the homomorphism
\[\Phi_n\colon 1+\mm\to \mathrm{Ext}_{\Z/p^n\Z}(\Z/p^n\Z, \mu_{p^n})  \]  
which maps $u$ to $\eta_{M^u[p^n]}$. Since $u$ is $p^n$-torsion, we may write it as a morphism $u\colon \Z/p^n\Z\to \G_{m,R}$. Let  $u^D\colon \Z\to \mu_{p^n}$ be the morphism obtained by applying Cartier duality to $u$.  By Remark \ref{rm} $\Phi_n(u)=\eta_{M^u[p^n]}$ is, up to sign, the sequence of cokernels of the multiplication by $p^n$ of the sequence obtained via pull-back along $u\colon \Z\to \G_{m,R}$ from the sequence $0\to \mu_{p^n}\to \G_{m,R}\to \G_{m,R}\to 0$. Passing to Cartier duals and recalling Remark \ref{r.push}, we then get that $\Phi(u)^D=-\Psi_n(u^D)$. 
The conclusion follows now from Lemma \ref{l.psi} and the fact that Cartier duality induces isomorphisms on both $1+\mm=\Hom_{R-{\rm gr}}(\Z/p^n\Z, \mu_{p^n})$ and $\mathrm{Ext}_R(\Q_p/\Z_p,\mu_{p^\infty}) $. 
\end{proof}
 
\section{Intermediate results}
%

\begin{rem}\label{rmbis}  We can refine some preliminary results in \cite{Ka}.
\begin{enumerate}
\item[(a)] Let $G$ be any smooth commutative $R$-group scheme. By \cite[Lemma 1.1.2]{Ka}  the kernel of the reduction map $G(R)\to G(k)$ is killed by $p^{s(s'-1)}$. In our hypothesis one can prove that it is killed by $p^{s'-1}$. 
Indeed, by the theory of Greenberg functor the sections $G(R/\mm^i)$, $1\leq i\leq s'$, can be identified with the group of $k$-rational sections of a smooth $k$-group scheme $\mathrm{Gr}_i(G)$. Further there are so-called change of level morphisms $\varrho_{i}^1\colon \mathrm{Gr}_{i+1}(G)\to \mathrm{Gr}_i(G)$ such that $\varrho_i^1(k)$ coincides with the reduction map $G(R/\mm^{i+1})\to G(R/\mm^i)$. By Greenberg's structure theorem (\cite{Gr},\cite[Thm. 12.13]{BGA})  the kernel of $\varrho_{i}^1$ is a $k$-vector group, thus killed by $p$. It follows then by induction that $\ker (G(R)\to G(k))$ is killed by $p^{s'-1}$.
\item[(b)] By \cite[Lemma 1.1.3 (3)]{Ka}  a morphism of semi-abelian varieties $\varphi_{0}\colon G_{0}\to H_{0}$ lifts over $R$ up to multiplication by $p^{s(s'-1)}$. We can improve also this estimate if $p>2$. Let $M$, $N$ be $1$-motives over $R$. By \cite[Thm. 2.1]{AB} there exists a canonical morphism  between universal extensions  \eqref{vmm} $\varphi^\natural\colon M^\natural\to N^\natural$ which lifts $\varphi_0^\natural$. 
If $\varphi^\natural(\V(M))\subseteq \V(N)$, then  $\varphi^\natural$ induces a morphism $\varphi\colon M\to N$ which lifts $\varphi_{0}$. In general, since the multiplication by $p^s$ morphism kills $\V(M)$,  the morphism $p^s\varphi^\natural$ maps $\V(M)$ to $0$. Hence $p^s\varphi_{0}$ lifts to a morphism $\text{``$p^s\varphi$''}\colon M\to N$.
\end{enumerate}
 \end{rem}
\subsection{Galois actions}\label{gal}
Let $k'/k$ be a finite Galois extension of Galois group $\Gamma=\mathrm{Gal}(k'/k)$ and set $R'=R\otimes_{W(k)}W(k')$. Note that $R'$ is an artinian local ring with residue field $k'$ and $\spec\, R'\to \spec\, R$ is a Galois covering of group $\Gamma$.  
Then $\Gamma$ naturally acts on $\M(R')$ and $\mathrm{Def}(R',k')$ and Serre\,-Tate functor $\Delta_{R'}$ \eqref{st} commutes with the Galois action.

Note that the datum of a $1$-motive $M$ over $R$ is equivalent to the datum of a $1$-motive $M'=[u'\colon L'\to G']$ over $R'$ with a $\Gamma$-action compatible with the $\Gamma$-action on $ \spec\, R'$.  Indeed $L'$ descends to a   lattice over $R$. Further, since $G'$ is a separated $R'$-scheme  homeomorphic to the semi-abelian $k'$-variety  $G'_0$, it  can be covered by affine open subschemes which are stable under the $\Gamma$-action and thus it descends  to a $R$-group scheme. The maximal subtorus $T'$ of $G'$  descends to a subtorus $T$ of $G$ and $G/T$ is an abelian scheme since it is an abelian scheme after base change to $R'$.
Similarly, the datum of an object $(G_0, \mathcal B,\varepsilon_0)$ in   $\mathrm{Def}(R,k)$ is equivalent to the datum of a  $(G'_0, \mathcal B',\varepsilon'_0)$ in   $\mathrm{Def}(R',k')$ together with a $\Gamma$-action compatible with the $\Gamma$-action on $k'$ (in the first and third entries) and the $\Gamma$-action on $R'$ (on the second entry).
\subsection{Serre\,-Tate theorem for semi-abelian varieties} 
 Recall that Cartier duality is a self duality on $\M(R)$ which provides  a contravariant equivalence between the categories $\M(R)_{\leq -1}$ and $\M(R)_{\geq -1}$ \cite[10.2]{De}, \cite[\S 1]{ABV}. Further if $M^*$ is the Cartier dual of a $1$-motive $M$, $M^*[p^\infty]$ is the Cartier dual of the BT group $M[p^\infty]$. 
 
Consider the functor 
\begin{eqnarray}\label{stab}
\M(R)_{\leq -1}&\to &\Def_{\leq -1} \\
\nonumber G &\mapsto & (G_{0}, G[p^\infty], \text{ natural } \varepsilon_0),
\end{eqnarray}  induced by \eqref{st}. 
The proof of the next theorem contains several arguments used for the proof of the more general Serre\,-Tate Theorem \ref{t.st}.  The fact that  \eqref{stab} is an equivalence of categories is needed later to prove both the fullness and the essential surjectiveness of \eqref{st}.  For this reason we can  not jump directly to the general case.

\begin{thm}\label{t.stab} 
The functor \eqref{stab} is an equivalence of categories.
\end{thm}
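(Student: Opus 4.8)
The plan is to verify that the functor \eqref{stab} is faithful, full and essentially surjective, in each case reducing along the weight filtration $T\subseteq G$ of a semi-abelian scheme to its two graded pieces—the maximal subtorus and the abelian quotient $A=G/T$—for which the answer is already known, and then to control the extension datum that glues them. Faithfulness is the easy part. A homomorphism $\varphi\colon G\to H$ respects the weight filtration: the composite $T_G\to H\to A_H$ vanishes because $\Hom_R(T_G,A_H)=0$, so $\varphi$ restricts to $f_T\colon T_G\to T_H$ and induces $f_A\colon A_G\to A_H$, and it is determined by the pair $(f_T,f_A)$ since $\Hom_R(A_G,T_H)=0$. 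As tori are rigid and reduction is injective on homomorphisms of abelian schemes, one gets $\Hom_R(G,H)\hookrightarrow\Hom_k(G_0,H_0)$, whence $\Delta_R$ is faithful on $\M(R)_{\leq -1}$.

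For fullness, suppose $\varphi_0\colon G_0\to H_0$ and a lift $\psi\colon G[p^\infty]\to H[p^\infty]$ of $\varphi_0[p^\infty]$ are given. By Lemma \ref{l.fil}(iii), $\psi$ respects weight filtrations, so it has a toric graded piece and an abelian graded piece. I would lift $\varphi_0$ on graded pieces: the toric part lifts uniquely to $f_T$ by rigidity, and the abelian part lifts by the classical Serre\,-Tate theorem (\cite[Theorem 1.2.1]{Ka}, \cite[V, Theorem 2.3]{M}) to $f_A$ with $f_A[p^\infty]$ equal to the abelian graded piece of $\psi$. A homomorphism $G\to H$ inducing $(f_T,f_A)$ exists if and only if the push-out of the extension class of $G$ along $f_T$ equals the pull-back of the extension class of $H$ along $f_A$ in $\Ext^1_R(A_G,T_H)$, and it is then unique because $\Hom_R(A_G,T_H)=0$. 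The difference $\delta$ of these two classes reduces to $0$ over $k$ (since $\varphi_0$ is an honest morphism) and satisfies $\delta[p^\infty]=0$ (since $\psi$ is a morphism of the associated BT extensions). Hence fullness follows from the injectivity part of the comparison $(\star)$ below, and once $\varphi$ is produced one has $\varphi[p^\infty]=\psi$ by the rigidity of morphisms of BT extensions already used in Lemma \ref{l.tm}.

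For essential surjectivity, start from a triple $(G_0,\cB,\varepsilon_0)$. By Lemma \ref{l.fil}(iii) the BT group $\cB$ carries a weight filtration $0\to\mathcal T\to\cB\to\mathcal A\to 0$; its toric part is $\mathcal T=T[p^\infty]$ for the unique torus $T$ lifting $T_0$, and by classical Serre\,-Tate its quotient $\mathcal A$ is $A[p^\infty]$ for an abelian scheme $A$ lifting $A_0$. It remains to realize $\cB$ as $G[p^\infty]$ for an extension $G$ of $A$ by $T$ with special fibre $G_0$. The class of $\cB$ in $\Ext^1_R(A[p^\infty],T[p^\infty])$ reduces, via $\varepsilon_0$, to the class of $G_0[p^\infty]$, i.e. to the image of the extension class $e_0\in\Ext^1_k(A_0,T_0)$ of $G_0$; so the surjectivity part of $(\star)$ produces an extension $G$ lifting $e_0$ with $G[p^\infty]\cong\cB$, and its special fibre is then the given $G_0$, compatibly with $\varepsilon_0$.

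Everything is thereby reduced to the key comparison $(\star)$: the functor $(-)[p^\infty]$ induces an isomorphism
\[
\ker\bigl(\Ext^1_R(A,T)\to\Ext^1_k(A_0,T_0)\bigr)\;\xrightarrow{\ \sim\ }\;\ker\bigl(\Ext^1_R(A[p^\infty],T[p^\infty])\to\Ext^1_k(A_0[p^\infty],T_0[p^\infty])\bigr).
\]
To prove it I would first reduce to $T=\G_m$ by writing $T$ through its character lattice and descending along a splitting field by the Galois descent argument of Lemma \ref{l.desc}. For $T=\G_m$ the Barsotti\,-Weil isomorphism identifies $\Ext^1_R(A,\G_m)$ with $A^\vee(R)$, and Cartier duality on BT groups turns the right-hand side into the deformations of an extension of $\Q_p/\Z_p$ by $A^\vee[p^\infty]$. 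The statement then becomes the exact analogue, with $A^\vee$ in place of $\G_m$, of the explicit computation carried out in Lemma \ref{l.tm} and \eqref{e.psi}, with $\ker(A^\vee(R)\to A_0^\vee(k))$ playing the role that $1+\mm$ played there. I expect this last comparison to be the main obstacle, precisely because it is the ingredient lying outside the classical Serre\,-Tate equivalence (which governs only the abelian quotient): the content is that deforming the gluing extension—equivalently, a point of $A^\vee$—is faithfully recorded by the deformation of the corresponding extension of BT groups.
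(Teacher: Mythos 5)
Your reduction steps are sound: faithfulness via the graded pieces, and the reformulation of both fullness and essential surjectivity as a statement about extension classes in $\Ext^1_R(A,T)$ versus $\Ext^1_R(A[p^\infty],T[p^\infty])$, are correct (and the Galois descent step to split $T$ matches the paper's own use of Subsection \ref{gal}). But the proposal has a genuine gap exactly where you flag ``the main obstacle'': the comparison $(\star)$ is asserted, not proved, and it is not ``the exact analogue'' of Lemma \ref{l.tm}. That lemma's proof is specific to $\G_{m}$ over an artinian local ring: it rests on the splitting $R^*\simeq k^*\times(1+\mm)$ via multiplicative representatives, on the identity $1+\mm=\mu_{p^n}(R)$ (so everything is killed by $p^n$ and \eqref{e.4e} applies), and on Cartier duality between the finite groups $\Z/p^n\Z$ and $\mu_{p^n}$. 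None of these has a counterpart when $\G_m$ is replaced by $A^\vee$: there is no multiplicative section of $A^\vee(R)\to A^\vee_0(k)$, and $\ker(A^\vee(R)\to A^\vee_0(k))$ is not the group of $R$-points of the relevant BT group. In fact, via Barsotti\,-Weil and Cartier duality, $(\star)$ for $T=\G_m$ is precisely the deformation statement for $1$-motives $[\Z\to A^\vee]$ with fixed abelian part --- i.e.\ the Cartier dual of the very theorem you are proving. The paper's own device for this point (Lemma \ref{l.help}) is no help to you here: its proof invokes Corollary \ref{c.ab}, which is deduced from Theorem \ref{t.stab}, so using it inside a proof of $(\star)$ would be circular.

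The paper avoids $(\star)$ altogether by arguments of Drinfeld--Katz type. For fullness it lifts $p^s\varphi_0$ to an honest morphism $\text{``}p^s\varphi\text{''}\colon G\to H$ using the canonical lifting of morphisms between universal extensions (Remark \ref{rmbis}(b), via \eqref{vmm}), identifies $p^s\psi=\text{``}p^s\varphi\text{''}[p^\infty]$ by rigidity of lifts of BT morphisms, and then divides by $p^s$ after checking $\text{``}p^s\varphi\text{''}$ kills $G[p^s]$; faithfulness likewise comes from universal extensions (or Katz's original argument), not from your splitting into graded pieces (which is fine, but note that injectivity of reduction on $\Hom$ of abelian schemes itself needs an argument such as \cite[Lemma 1.1.3]{Ka}). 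For essential surjectivity in the split-torus case the paper does not solve an $\Ext$-comparison: it produces \emph{some} lift $G'$ of $G_0$ via the dual $1$-motive $[w_0\colon\Z^d\to A_0^*]$ and then repeats Katz's isogeny correction word by word to adjust the BT group, before descending as you do. If you want to salvage your architecture, you must supply a proof of $(\star)$ --- e.g.\ by running exactly this Drinfeld-style lifting/isogeny argument on the extension $0\to T\to G\to A\to 0$ --- at which point your proof essentially becomes the paper's.
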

\begin{proof}  The faithfulness can be proved as in \cite[p. 144]{Ka} for the case of abelian varieties. Let us give an alternative proof when $p>2$. 
If $\varphi\colon G\to H$ is a morphism in $\M(R)_{\leq -1}$ and $\varphi_0=0$ then  the induced morphism between universal extensions  $\varphi^\natural\colon G^\natural\to H^\natural$ is the zero map \cite[Thm. 2.1]{AB} and hence $\varphi=0$.  We now prove the fullness when $p>2$; if $p=2$ the same proof works replacing $s$ with $s(s'-1)$ and it coincides with the one in \cite[p. 144]{Ka}. Given a $\varphi_0\colon G_0\to H_0$  and a morphism $\psi\colon G[p^\infty]\to H[p^\infty]$ lifting $\varphi_0[p^\infty]$ by Remark \ref{rmbis} (b) 
there exists a morphism $\text{``$p^s\varphi$''}\colon G\to H$ lifting $p^s\varphi_0$. Further $p^s\psi=\text{``$p^s\varphi$''}[p^\infty]$ by \cite[Lemma 1.1.3(2)]{Ka} since both morphisms lift $p^s\varphi_0[p^\infty]$. Hence $\text{``$p^s\varphi$''}$ kills $G[p^s]$ and thus  $\text{``$p^s\varphi$''}=p^s\varphi$ for a  morphism $\varphi\colon G\to H$ (necessarily unique since $\mathrm{Hom}_{R\text{-gr}}(G,H)$ has no $p$-torsion). Thus the fully faithfulness is proved.

Let now $(G_0, \mathcal B,\varepsilon_0)$ be an object in $\Def_{\leq -1}$.
If the maximal subtorus of $G_0$ is split of dimension $d$, then $G_0$ lifts to an $R$-group scheme $G$ which is extension of an abelian scheme by a $\G_{m,R}^d$ and, for proving the essential surjectivity, one can repeat word by word the proof of the classical Serre\,-Tate theorem \cite[Thm. 1.2.1]{Ka}.
Indeed   the Cartier dual of $G_0$ is a $1$-motive $[w_0\colon \Z^d\to A^*_0]$ with $A_0^*$ an abelian variety, which lifts to an abelian scheme $A^*$ over $R$. Hence $w_0$ lifts to a morphism $w\colon \Z^d\to A^*$ over $R$ and by applying again Cartier duality, one gets $G$.

For the general case, one proceeds by Galois descent as follows (cf. Subsection \ref{gal}).  Let $k'/k$ be a finite Galois extension such that the maximal subtorus of $G_0$ becomes split over $k'$. Let $\Gamma=\mathrm{Gal}(k'/k)$ and set $R'=R\otimes_{W(k)}W(k')$. Let $(G'_0, \mathcal B',\varepsilon'_0)$ in  $\mathrm{Def}(R',k')_{\leq -1}$ be obtained  from $(G_0, \mathcal B,\varepsilon_0)$  via  base change along  $k'/k$ (on semi-abelian varieties) and $R'/R $ (on BT groups).
For any $\sigma\in \Gamma$ we  have an isomorphism in $\mathrm{Def}(R',k')_{\leq -1}$
\[
(\varphi_{\sigma,0},\psi_\sigma)\colon \left(G'_0,\mathcal B',\mathrm{can}\right)\by{\sim} \left(\sigma^*G'_0,\sigma^*\mathcal B',\sigma^*\mathrm{can}\right),
  \]
 with \[\varphi_{\sigma,0}\colon G'_0\by{\sim}\sigma^* G'_0, \qquad
\psi_\sigma\colon \mathcal B'\by{\sim} \sigma^* \mathcal B',\qquad \varphi_{\sigma,0}[p^\infty]\circ \mathrm{can}=(\sigma^*\mathrm{can})\circ \psi_{\sigma,0}\ .\] 

By the previous step we know that there exists a  $G'$ in $\M(R')_{\leq -1}$ whose image in   $\mathrm{Def}(R',k')_{\leq -1}$ is the triple  $(G'_0, \mathcal B',\varepsilon'_0)$.
By the fully faithfulness of \eqref{stab} the isomorphism $(\varphi_{\sigma,0},\psi_\sigma)$ gives a unique isomorphism $\varphi_\sigma\colon  G'\by{\sim}\sigma^* G'$ which lifts $\varphi_{\sigma,0}$ and restricts to $\psi_\sigma$ on BT groups. 
These morphisms  define a $\Gamma$-action on $G'$ compatible with the action on $\spec\, R'$ and hence $G'$ descends  to  a $G$ in $\M(R)_{\leq -1}$ with BT group isomorphic to $\mathcal B$.
\end{proof}

Theorem \ref{t.stab} allows another step towards the proof of Theorem \ref{t.st}.

\begin{cor}\label{c.ab}
The functor $\M(R)_{\geq-1}\to \Def_{\geq-1}$ induced by \eqref{st} is an equivalence of categories.
\end{cor}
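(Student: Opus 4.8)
The plan is to deduce Corollary \ref{c.ab} from Theorem \ref{t.stab} by Cartier duality, transporting the equivalence already established on the semi-abelian side $\M(R)_{\leq -1}$ to the abelian side $\M(R)_{\geq -1}$. First I would recall the two dualities at our disposal. On one hand, Cartier duality $M\mapsto M^*$ is a contravariant self-equivalence of $\M(R)$ (\cite[10.2]{De}, \cite[\S 1]{ABV}) which restricts to a contravariant equivalence $D_{\M}\colon \M(R)_{\geq -1}\to \M(R)_{\leq -1}$, since the dual of $[L\to A]$ is the semi-abelian variety given as an extension of $A^*$ by the torus with character group $L$. On the other hand, Serre duality for BT groups $\cB\mapsto \cB^*$ is a contravariant self-equivalence, and Cartier duality for $1$-motives over $k$ interchanges $\M(k)_{\leq -1}$ and $\M(k)_{\geq -1}$.

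Next I would assemble these, using the recalled compatibility $M^*[p^\infty]=(M[p^\infty])^*$, into a contravariant functor
\[
D_{\Def}\colon \Def_{\leq -1}\longrightarrow \Def_{\geq -1},\qquad (M_0,\cB,\varepsilon_0)\longmapsto \bigl(M_0^*,\cB^*,(\varepsilon_0^{-1})^*\bigr),
\]
where $(\varepsilon_0^{-1})^*\colon \cB_0^*\to (M_0[p^\infty])^*=M_0^*[p^\infty]$ is the dual of the inverse of $\varepsilon_0$. That $D_{\Def}$ is well defined and an equivalence is immediate, since each of its three ingredients (Serre duality of BT groups, Cartier duality of $1$-motives over $k$, and the canonical identification of $[p^\infty]$ with the dual) is one.

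The key step is then to produce a natural isomorphism $\Delta_R^{\geq}\cong D_{\Def}\circ \Delta_R^{\leq}\circ D_{\M}$ of functors $\M(R)_{\geq -1}\to \Def_{\geq -1}$, where $\Delta_R^{\leq}$ denotes \eqref{stab} and $\Delta_R^{\geq}$ the functor of Corollary \ref{c.ab}. Unravelling the right-hand side on an object $M=[L\to A]$ yields $\bigl(((M^*)_0)^*,(M^*[p^\infty])^*,\ldots\bigr)$; I would identify $((M^*)_0)^*$ with $M_0$ using that Cartier duality of $1$-motives commutes with base change to the residue field together with biduality, and $(M^*[p^\infty])^*$ with $M[p^\infty]$ using $M^*[p^\infty]=(M[p^\infty])^*$ and biduality of BT groups. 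The remaining point is that the natural $\varepsilon_0$ attached to $M$ agrees with the one produced by double dualization, which amounts to the compatibility of these canonical identifications with duality and is formal.

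Finally I would conclude: in the displayed factorization the two outer functors $D_{\M}$ and $D_{\Def}$ are contravariant equivalences, and the middle functor $\Delta_R^{\leq}$ is an equivalence by Theorem \ref{t.stab}, so the composite, hence $\Delta_R^{\geq}$, is an equivalence of categories. I expect the only genuine work to lie in the compatibility of the $\varepsilon_0$'s in the key step; everything else is the formal transport of Theorem \ref{t.stab} across Cartier duality, so no serious obstacle should arise beyond careful bookkeeping of the dualities.
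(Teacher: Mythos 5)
Your proposal is correct and is exactly the paper's argument: the paper proves Corollary \ref{c.ab} with the single line ``apply Cartier duality to the previous result,'' relying on the same recalled facts you use (Cartier duality exchanges $\mathcal{M}_1(R)_{\leq -1}$ and $\mathcal{M}_1(R)_{\geq -1}$, and $M^*[p^\infty]=(M[p^\infty])^*$). You have merely spelled out the transport of the equivalence of Theorem \ref{t.stab} across the dualities, including the routine compatibility of the $\varepsilon_0$'s, which the paper leaves implicit.
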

\begin{proof}
 It is sufficient to apply Cartier duality to the previous result.
\end{proof}

\section{The general case}
\subsection{The faithfulness}

The proof of the faithfulness of \eqref{st}  follows easily from faithfulness on $\M(R)_{\leq -1} $.

\begin{propose}\label{p.inj} Let $M, N $ be two $1$-motives over $R$. Then the reduction map
\[\Hom_{\M(R)}(M,N)\to \Hom_{\M(k)}(M_{0},N_{0})\]
is injective.
\end{propose}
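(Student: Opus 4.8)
The plan is to unwind a morphism of $1$-motives into its two weight-graded components and to handle each by an ingredient that is already available. Writing $M=[u\colon L\to G]$ and $N=[v\colon L'\to G']$, a morphism $\varphi\colon M\to N$ in $\M(R)$ is a commutative square, that is, a pair $(\varphi_L,\varphi_G)$ with $\varphi_L\colon L\to L'$ a homomorphism of lattices, $\varphi_G\colon G\to G'$ a homomorphism of semi-abelian schemes, and $v\circ\varphi_L=\varphi_G\circ u$. Since two morphisms of complexes coincide precisely when their two components coincide, I would reduce the asserted injectivity to the two separate implications: $\varphi_{L,0}=0\Rightarrow\varphi_L=0$ and $\varphi_{G,0}=0\Rightarrow\varphi_G=0$. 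These are exactly the components of $\varphi$ on $\gr^W_0M=[L\to 0]$ and on $W_{-1}M=[0\to G]$.

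For the lattice component I would use the rigidity of \'etale group schemes: both $L$ and $L'$ are \'etale over $R$, and since $R$ is an artinian (hence henselian) local ring with residue field $k$, base change along $R\to k$ identifies lattices over $R$ with lattices over $k$. In particular the reduction map $\Hom_{R\text{-gr}}(L,L')\to\Hom_{k\text{-gr}}(L_0,L'_0)$ is a bijection, so $\varphi_{L,0}=0$ forces $\varphi_L=0$.

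For the semi-abelian component I would regard $\varphi_G$ as a morphism in $\M(R)_{\leq-1}$ between $[0\to G]$ and $[0\to G']$ and invoke the faithfulness half of Theorem \ref{t.stab}, which says precisely that the reduction map on $\M(R)_{\leq-1}$ is injective; hence $\varphi_{G,0}=0$ forces $\varphi_G=0$. Combining the two gives $\varphi=(\varphi_L,\varphi_G)=0$, as wanted. I do not expect a real obstacle at this stage: the substantive work is the faithfulness over $\M(R)_{\leq-1}$ (resting on the universal-extension morphism of \cite{AB}) and the standard rigidity of \'etale lattices, both already in hand. The only thing to verify with some care is the bookkeeping that a morphism of $1$-motives is determined by its two components and that these reduce correctly modulo $\mm$.
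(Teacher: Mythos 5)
Your proof is correct and matches the paper's own argument exactly: the paper likewise splits $\varphi$ into its degree $-1$ component, killed by the equivalence of \'etale sites over $R$ and over $k$, and its degree $0$ component, killed by the faithfulness part of Theorem \ref{t.stab}. Your extra bookkeeping about morphisms of $1$-motives being determined by their two components is fine and unproblematic.
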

\begin{proof}
Consider a morphism $\varphi\colon M\to N$ and assume that its reduction modulo $\mm$ is the $0$ morphism. Hence  $\varphi=0$ in degree $-1$ by the equivalence of the \'etale sites over $R$ and over $k$ and $\varphi=0$ in degree $0$ by  Theorem \ref{t.stab}.
\end{proof}

\begin{cor}\label{faith}
The functor  \eqref{st} is faithful.
 \end{cor}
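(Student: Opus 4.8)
The plan is to derive Corollary \ref{faith} directly from Proposition \ref{p.inj}, since faithfulness of the functor $\Delta_R$ is precisely the assertion that the induced map on Hom-sets is injective. First I would recall what faithfulness means concretely: the functor $\Delta_R\colon \M(R)\to\Def$ is faithful if and only if, for every pair of $1$-motives $M,N$ over $R$, the map
\[
\Hom_{\M(R)}(M,N)\longrightarrow \Hom_{\Def}\bigl(\Delta_R(M),\Delta_R(N)\bigr)
\]
is injective. A morphism $\varphi\colon M\to N$ is sent by $\Delta_R$ to the pair consisting of its reduction $\varphi_0\colon M_0\to N_0$ and the induced map $\varphi[p^\infty]\colon M[p^\infty]\to N[p^\infty]$ on Barsotti\,-Tate groups.

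Now I would observe that if $\Delta_R(\varphi)=0$, then in particular its first component vanishes, \emph{i.e.} $\varphi_0=0$ is the zero morphism $M_0\to N_0$ over $k$. Thus it suffices to show that the reduction map $\Hom_{\M(R)}(M,N)\to\Hom_{\M(k)}(M_0,N_0)$, $\varphi\mapsto\varphi_0$, is injective; but this is exactly the content of Proposition \ref{p.inj}. Hence $\varphi_0=0$ forces $\varphi=0$, and the faithfulness follows at once.

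This corollary is immediate and I do not expect any genuine obstacle: all the substantive work has already been carried out in establishing the injectivity of the reduction map on morphisms (Proposition \ref{p.inj}), which itself rests on the equivalence of \'etale sites in degree $-1$ together with the faithfulness part of Theorem \ref{t.stab} in degree $0$. The only point to make carefully is that vanishing of the full datum $\Delta_R(\varphi)$ implies vanishing of its reduction $\varphi_0$, which is true simply because $\varphi_0$ is recorded as one of the components of $\Delta_R(\varphi)$. I would therefore write the proof as a one-line deduction, remarking that faithfulness of $\Delta_R$ is a formal consequence of the injectivity of the reduction map on Hom-sets proved in Proposition \ref{p.inj}.
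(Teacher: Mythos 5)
Your proof is correct and matches the paper's intended argument: Corollary \ref{faith} is stated in the paper without a separate proof precisely because it is, as you say, a formal consequence of Proposition \ref{p.inj}, using that $\varphi_0$ is a component of $\Delta_R(\varphi)$ and that Hom-sets of $1$-motives are abelian groups so injectivity can be checked on the kernel. Nothing is missing.
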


\subsection{The fullness}

For the proof we will need the following lemma.

\begin{lemma}\label{l.help}
Let $M=[u\colon L\to G]$ be a $1$-motive over $R$. If $u_0=0$ and $M[p^\infty]$ is split extension of $L[p^\infty]$ by $G[p^\infty]$, then $u=0$.
\end{lemma}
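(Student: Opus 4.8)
The plan is to peel off the abelian part of $u$ using the equivalence already established in the abelian-type case (Corollary \ref{c.ab}), thereby reducing to a toric-type $1$-motive, and then to invoke the ``orthogonal'' one-dimensional computation of Lemma \ref{l.tm}.

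First I would treat the abelian quotient. Pushing out the weight-filtration sequence \eqref{e.pi} along the projection $G[p^\infty]\twoheadrightarrow A[p^\infty]$, where $A=G/T$, identifies $M_\ab[p^\infty]=M[p^\infty]/T[p^\infty]$ with an extension of $L[p^\infty]$ by $A[p^\infty]$; since the pushout of a split sequence is split, $M_\ab[p^\infty]$ is again split. As $u_{\ab,0}=0$, the triple $(M_{\ab,0},M_\ab[p^\infty],\varepsilon_0)$ is isomorphic in $\Def_{\geq-1}$ to the image of the trivial lift $[0\colon L\to A]$, the isomorphism being the identity on the first entry and the chosen splitting on the BT part. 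By the full faithfulness in Corollary \ref{c.ab} this lifts to an isomorphism of $1$-motives $M_\ab\by{\sim}[0\colon L\to A]$ inducing the identity on $M_{\ab,0}$, and any such isomorphism $(\alpha,\beta)$ satisfies $\beta\circ u_\ab=0$ with $\beta$ invertible, forcing $u_\ab=0$. Hence $u$ factors through the maximal subtorus $T$, and we may henceforth assume $M=[u\colon L\to T]$.

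Next I would reduce the toric case to Lemma \ref{l.tm}. After base change along a faithfully flat $R\to R'$ coming from a finite Galois extension $k'/k$ that splits $T_0$ and trivialises $L_0$ (cf. Subsection \ref{gal}), we may assume $L=\Z^r$ and $T=\G_{m,R}^{d}$; since $R\to R'$ is injective it suffices to prove $u=0$ after this base change. Writing $u$ in coordinates as morphisms $u_{ij}\colon \Z\to \G_{m,R}$ with $u_{ij}(1)\in 1+\mm$ (because $u_0=0$), the extension \eqref{e.pi} decomposes, along the splittings $L[p^\infty]=(\Q_p/\Z_p)^r$ and $T[p^\infty]=\mu_{p^\infty}^d$, into the classes of the $M^{u_{ij}}[p^\infty]$ in $\Ext_R(\Q_p/\Z_p,\mu_{p^\infty})$; so splitness of $M[p^\infty]$ is equivalent to splitness of each $M^{u_{ij}}[p^\infty]$. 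For every pair $(i,j)$ both $M^{u_{ij}}$ and the trivial lift $[0\colon\Z\to\G_{m,R}]$ lift $M_{ij,0}=[0\colon \Z\to\G_{m,k}]$ and have split BT group, so by the uniqueness in Lemma \ref{l.tm} they coincide; thus $u_{ij}=0$ for all $i,j$, whence $u=0$.

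The main obstacle is the first step: one must verify that the identification $M_\ab[p^\infty]=M[p^\infty]/T[p^\infty]$ is compatible with the extension structures, so that splitness is genuinely inherited by the abelian quotient, and that the resulting isomorphism of $\Def_{\geq-1}$-objects respects the natural $\varepsilon_0$, so that Corollary \ref{c.ab} applies verbatim. The second delicate point is the coordinatewise identification of the class of \eqref{e.pi} with the $M^{u_{ij}}[p^\infty]$, which rests on the functoriality in Remark \ref{rm} together with the computation in the proof of Lemma \ref{l.tm}; once these are in place the conclusion is immediate.
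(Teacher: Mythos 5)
Your skeleton is exactly the paper's own proof: reduce to $u_{\ab}=0$ via Corollary \ref{c.ab}, conclude that $u$ factors through the torus $T$, pass to a base change splitting $T$ and trivialising $L$, and finish coordinatewise with Lemma \ref{l.tm}. The toric half of your argument is correct and usefully more explicit than the paper, which simply localizes and cites Lemma \ref{l.tm}; the coordinatewise decomposition of the class of \eqref{e.pi} and the descent step along the faithfully flat map $R\to R'$ are exactly the details left implicit there.

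However, the point you flag as ``the main obstacle'' in your first step is a genuine gap, not a routine verification. The pair (identity, chosen splitting) is a morphism in $\Def_{\geq-1}$ only if the chosen splitting of $M_\ab[p^\infty]$ reduces modulo $\mm$ to the canonical splitting of $M_{\ab,0}[p^\infty]$ determined by $u_{\ab,0}=0$. An arbitrary splitting reduces to the canonical one composed with a shear by some $h\in \Hom_k(L_0[p^\infty],A_0[p^\infty])\cong T_pA_0(k)^{\mathrm{rk}\,L}$, where $T_pA_0(k)=\varprojlim A_0[p^m](k)$, and since automorphisms of the $1$-motive $[0\colon L_0\to A_0]$ act diagonally on its BT group, $h$ cannot be absorbed into the first entry: you would have to correct the splitting over $R$, i.e.\ lift $h$ along $\Hom_R(L[p^\infty],A[p^\infty])\to \Hom_k(L_0[p^\infty],A_0[p^\infty])$. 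That reduction map is not surjective in general: lifting a homomorphism $\Q_p/\Z_p\to A_0[p^\infty]$ to $R$ means lifting a compatible system of torsion points, and the obstruction is precisely the Serre\,-Tate parameter of $A$. Concretely, take $R=k[\epsilon]$, $\epsilon^2=0$, $k$ algebraically closed, $A_0$ ordinary and $A$ a non-canonical lift, and choose $\alpha=(\alpha_m)\in T_pA_0(k)$ whose first component $\alpha_1$ does not lift to $A[p](R)$; then $a:=p^m\tilde\alpha_m\in \widehat{A}(\mm)$ (for arbitrary lifts $\tilde\alpha_m\in A(R)$ of $\alpha_m$ and $m\geq 1$) is independent of choices and nonzero, while $\beta_m:=p\tilde\alpha_{m+1}$ is a compatible system with $p^m\beta_m=a$, which by Remark \ref{rm} produces a splitting of $\eta_{M[p^\infty]}$ for $M=[a\colon \Z\to A]$ even though $a\neq 0$ --- but that splitting reduces to the canonical one shifted by the shear $\alpha$, so no isomorphism of triples results. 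Thus splitness of $M_\ab[p^\infty]$ alone does not let you apply Corollary \ref{c.ab} as you propose; what is actually available where the lemma is invoked (in the proof of Proposition \ref{full}, where the splitting arises from a $\psi$ reducing to $\varphi_0[p^\infty]$) is a splitting compatible with $\varepsilon_0$, and a complete argument must either carry that compatibility as a hypothesis or establish it. To be fair, the paper's own proof compresses this step into the single sentence ``by Corollary \ref{c.ab}, $u_\ab$ is the $0$ morphism,'' so your instinct to flag it was exactly right; but flagging is not filling, and as written the first step of your reduction does not go through.
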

\begin{proof} We may work \'etale locally on $R$ and assume $L$ constant and $T$ a split torus. By Corollary \ref{c.ab} $u_\ab\colon L\to A$ is the $0$ morphism. Hence $u$ factors through $T$. Let $M_t=[u\colon L\to T]$ and note that  $M_t[p^\infty]$ is split  extension $L[p^\infty]$ by $T[p^\infty]$.   Hence,  $u=0$ by Lemma \ref{l.tm}. 
\end{proof}
  
\begin{propose}\label{full}
The functor \eqref{st} is full.  
\end{propose}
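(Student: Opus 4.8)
The plan is to build the lifted morphism componentwise along the weight filtration and then verify that the two components assemble into a morphism of complexes inducing $\psi$. Write $M=[u\colon L\to G]$ and $N=[u'\colon L'\to G']$, and let $(\varphi_0,\psi)$ be a morphism in $\Def$ from $\Delta_R(M)$ to $\Delta_R(N)$, so $\varphi_0\colon M_0\to N_0$ is a morphism of $1$-motives over $k$ with lattice part $\lambda_0\colon L_0\to L'_0$ and semi-abelian part $g_0\colon G_0\to G'_0$, while $\psi\colon M[p^\infty]\to N[p^\infty]$ lifts $\varphi_0[p^\infty]$. By Lemma \ref{l.fil}(iii) the map $\psi$ respects weight filtrations, hence restricts to $\psi_{-1}\colon G[p^\infty]\to G'[p^\infty]$ and induces $\psi_0\colon L[p^\infty]\to L'[p^\infty]$ on the weight-$0$ quotients. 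First I would produce the lattice component as the unique \'etale lift $\lambda\colon L\to L'$ of $\lambda_0$ (equivalence of the \'etale sites over $R$ and $k$, as in Proposition \ref{p.inj}), and the semi-abelian component $g\colon G\to G'$ by invoking Theorem \ref{t.stab}: the pair $(g_0,\psi_{-1})$ is a morphism in $\Def_{\leq -1}$, so it lifts to a unique $g$ with $g[p^\infty]=\psi_{-1}$. By \'etale rigidity one also gets $\psi_0=\lambda[p^\infty]$.

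The crux is to show that $(\lambda,g)$ is a morphism of complexes, i.e. that $\delta:=u'\circ\lambda-g\circ u\colon L\to G'$ vanishes; this is exactly what Lemma \ref{l.help} is designed for. Reducing modulo $\mm$ gives $\delta_0=u'_0\lambda_0-g_0u_0=0$ because $\varphi_0$ is a morphism, so it remains to see that the $1$-motive $[\delta\colon L\to G']$ has split BT group. Here I would use that the assignment $u\mapsto[\eta_{[u][p^\infty]}]\in\Ext(L[p^\infty],G'[p^\infty])$ (built from \eqref{dia.big}, Remark \ref{rm} and \eqref{e.pi}) is functorial and additive in the structure map: the morphism $(\mathrm{id}_L,g)\colon[u\colon L\to G]\to[gu\colon L\to G']$ exhibits $[gu][p^\infty]$ as the push-out $g[p^\infty]_*M[p^\infty]$, while $(\lambda,\mathrm{id}_{G'})\colon[u'\lambda\colon L\to G']\to N$ exhibits $[u'\lambda][p^\infty]$ as the pull-back $\lambda[p^\infty]^*N[p^\infty]$. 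Since $\psi$ is a morphism of extensions lifting $(\psi_{-1},\psi_0)=(g[p^\infty],\lambda[p^\infty])$, one has $g[p^\infty]_*[\eta_{M[p^\infty]}]=\lambda[p^\infty]^*[\eta_{N[p^\infty]}]$, so the classes of $[gu][p^\infty]$ and $[u'\lambda][p^\infty]$ coincide; by additivity the class of $[\delta][p^\infty]$ is trivial. Thus $[\delta\colon L\to G']$ has split BT group and $\delta_0=0$, whence $\delta=0$ by Lemma \ref{l.help}, and $\varphi:=(\lambda,g)\colon M\to N$ is a morphism lifting $\varphi_0$.

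Finally I would check $\varphi[p^\infty]=\psi$. Both are liftings of $\varphi_0[p^\infty]$ that agree with $\psi_{-1}$ on $G[p^\infty]$ and induce $\psi_0$ on $L[p^\infty]$, so their difference factors through a homomorphism $\bar\chi\colon L[p^\infty]\to G'[p^\infty]$ with $\bar\chi_0=0$. Working \'etale-locally so that $L\cong\Z^r$, one has $\Hom_R(L[p^\infty],G'[p^\infty])\cong(\varprojlim_m G'[p^m](R))^r$, a Tate module, and reduction to $k$ is injective: the kernel of $G'[p^m](R)\to G'[p^m](k)$ lies in $\ker(G'(R)\to G'(k))$, which is killed by $p^{s'-1}$ by Remark \ref{rmbis}(a), so any compatible system landing in this kernel is $p^{s'-1}$-divisible and hence zero. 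Therefore $\bar\chi=0$, giving $\varphi[p^\infty]=\psi$ and $\Delta_R(\varphi)=(\varphi_0,\psi)$.

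I expect the main obstacle to be the middle step: translating the commutativity of the defining square into the triviality of a BT-group extension class. This rests on the dictionary between the structure map $u$ of a $1$-motive and the extension class of its BT group, together with the additivity and functoriality of that dictionary, so that the existence of a \emph{single} morphism $\psi$ forces the two a priori distinct maps $u'\lambda$ and $gu$ to agree; this is precisely the input that makes Lemma \ref{l.help} applicable. The remaining verifications (\'etale rigidity for $\lambda$ and for $\psi_0$, and the Tate-module injectivity) are routine.
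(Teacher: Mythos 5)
Your proof is correct and follows essentially the same route as the paper: lift the lattice and semi-abelian parts via \'etale rigidity and Theorem \ref{t.stab}, reduce the commutativity of the square to the splitness of the BT extension of $[\,u'\lambda-gu\,]$ --- your identity $g[p^\infty]_*\eta_{M[p^\infty]}=\lambda[p^\infty]^*\eta_{N[p^\infty]}$ is exactly the paper's finite-level computation with $\tilde\eta_{\bullet[p^r]}$ and Remark \ref{rm}, passed to the limit --- and conclude with Lemma \ref{l.help}. The only (harmless) deviation is the final step $\varphi[p^\infty]=\psi$, where the paper simply invokes rigidity of morphisms of BT groups over artinian rings (\cite[Lemma 1.1.3(2)]{Ka}), while you re-prove the needed special case by hand via the Tate-module argument and Remark \ref{rmbis}(a).
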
 
\begin{proof}
Let $M=[u\colon L\to G], N=[v\colon F\to H]$ be two $1$-motives over $R$, $\varphi_0\colon M_{0}\to N_{0}$  a morphism between their reduction modulo $\mm$   and $\psi\colon  M[p^\infty]\to N[p^\infty]$ a lifting of $\varphi_0[p^\infty]\colon  M_{0}[p^\infty]\to N_{0}[p^\infty]$.  We have to prove that 
there exists a  lifting $\varphi\colon M\to N$ of $\varphi_{0}$ over $R$ such that $\varphi[p^\infty]=\psi$.

Let $\varphi_0=(f_0,g_0)$, i.e., $f_0\colon L_0\to F_0, g_0\colon G_0\to H_0$ and $g_0\circ u_0=v_0\circ f_0$. Recall that any lifting $\psi$ of $\varphi_0[p^\infty]$ respects weight filtrations  by Lemma \ref{l.fil}. Hence, by Theorem \ref{t.stab}  there exists a unique morphism $g\colon G\to H$ lifting  $g_0$. Further, by the equivalence between the category of \'etale group schemes over $k$ and the category of \'etale group schemes over $R$, there exists a unique $f\colon L\to F$ lifting $f_0$.
We are left to prove that $g\circ u=v\circ f$, so that $\varphi=(f,g)$ is a morphism of $1$-motives, and that $\varphi[p^\infty]=\psi$. The latter equality follows from \cite[Lemma 1.1.3 2)]{Ka} since both morphisms are liftings of $\varphi_0[p^\infty]$. 

Let  $Z=[w\colon L\to H]$ with $w= g\circ u-v\circ f$. We claim that 
$\eta_{Z[p^\infty]}$ in \eqref{e.pi} is split. For proving this claim, it is sufficient to check that $\eta_{Z[p^r]}$ (and hence $\tilde \eta_{Z[p^r]}$ in \eqref{dia.big}) is split for any $r$.  Since $\psi[p^r]$ restricts to the morphism $g[p^r]\colon G[p^r]\to H[p^r]$ on weight $\leq -1$ subgroups  and induces $f/p^rf\colon L/p^rL\to F/p^rF$ in weight $0$,  it is 
\[(f/p^rf)^*\eta_{N[p^r]}=g[p^r]_*\eta_{M[p^r]},
\]
and hence  
\[f^*\tilde\eta_{N[p^r]}=g[p^r]_*\tilde\eta_{M[p^r]}.
\] 
We conclude  then  by applying Remark \ref{rm} that
\begin{eqnarray*}
\tilde\eta_{Z[p^r]}=
(-w)^*\xi_{H[p^r]}&=& f^*(v^*\xi_{H[p^r]})-u^*(g^*\xi_{H[p^r]})=
f^*\tilde\eta_{N[p^r]}-u^*g[p^r]_*\xi_{G[p^r]}\\&=&f^*\tilde\eta_{N[p^r]}-g[p^r]_*\tilde\eta_{M[p^r]}  =0.\end{eqnarray*}
Hence $Z$  is a $1$-motive  such that $\eta_{Z[p^\infty]}$ is split extension of $L[p^\infty]$ by $H[p^\infty]$ and $w_0=0$. Thus $w=0$ by Lemma \ref{l.help}. 
\end{proof}

\subsection{Essential surjectiveness}

The strategy of the proof is first to construct the desired $1$-motive \'etale locally and then to apply descent. By Theorem \ref{t.stab}, Corollary \ref{c.ab} and Lemma \ref{l.tm} we have already partial results in this direction.
We prove another special case. 
\begin{lemma}\label{l.lz} Let $M_0=[u_0\colon \Z^m\to G_0]$ be a $1$-motive over $k$ with $T_0=\G_{m,k}^d$ a split torus. Let $\cB$ be a BT group over $R$ lifting $M_0[p^\infty]$. Then there exists a unique (up to unique isomorphism) $1$-motive $M$ over $R$ lifting $M_0$ and with BT group isomorphic to $\cB$.
\end{lemma}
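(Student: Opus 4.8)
The plan is to build $M=[u\colon\Z^m\to G]$ in three stages: lift the semi-abelian part and the abelianized lattice map using the cases already settled, then pin down the remaining toric ambiguity via Lemma \ref{l.tm}; uniqueness will come for free from the full faithfulness now available. Throughout, $L=\Z^m$ is the unique (constant) \'etale lift of $L_0$ and $T=\G_{m,R}^d$ the unique (split) lift of $T_0$, so no descent is needed here.

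First I would produce the target $G$. Since $G_0\in\M(k)_{\le-1}$ and the weight filtration of $\cB$ (Lemma \ref{l.fil}(iii)) gives $W_{-2}\cB\subseteq W_{-1}\cB\subseteq\cB$ lifting $T_0[p^\infty]\subseteq G_0[p^\infty]\subseteq M_0[p^\infty]$, the pair $(G_0,W_{-1}\cB)$ is an object of $\Def_{\le-1}$, and Theorem \ref{t.stab} yields a unique semi-abelian $G$ over $R$ lifting $G_0$ with $G[p^\infty]=W_{-1}\cB$; set $A=G/T$, so that $T[p^\infty]=W_{-2}\cB$ and $A[p^\infty]=W_{-1}\cB/W_{-2}\cB$. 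Next, $\cB/W_{-2}\cB$ is a BT group lifting $M_{0,\ab}[p^\infty]$, so $(M_{0,\ab},\cB/W_{-2}\cB)$ lies in $\Def_{\ge-1}$ and Corollary \ref{c.ab} produces a unique $u_\ab\colon\Z^m\to A$ lifting $u_{0,\ab}$ with $M_\ab[p^\infty]=\cB/W_{-2}\cB$; the abelian scheme output by Corollary \ref{c.ab} agrees with $A=G/T$ by Serre\,-Tate uniqueness for abelian schemes.

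It then remains to lift $u_\ab$ across $G\to A$ to a map $u\colon\Z^m\to G$ that reduces to $u_0$ and induces $\cB$ on BT groups. Because $T=\G_{m,R}^d$ and $R$ is local, $H^1_{\mathrm{fppf}}(R,T)=\operatorname{Pic}(R)^d=0$ and $T(R)\to T_0(k)$ is onto, so a lift $\tilde u\colon\Z^m\to G$ of $u_\ab$ with $\tilde u_0=u_0$ exists, and every such lift is $\tilde u+t$ with $t\in\Hom(\Z^m,\ker(T(R)\to T_0(k)))=(1+\mm)^{dm}$. The assignment $u\mapsto[M^u[p^\infty]]\in\Ext_R((\Q_p/\Z_p)^m,G[p^\infty])$ is additive: Remark \ref{rm} identifies $\tilde\eta_{M^u[p^r]}$ with $\pm u^*\xi_{G[p^r]}$, and pull-back of the primitive sequence $\xi_{G[p^r]}$ along a homomorphism is additive (as $p^r\colon G\to G$ commutes with the group law). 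Writing $M^{\tilde u}:=[\tilde u\colon\Z^m\to G]$, I would then compute $\delta:=[\cB]-[M^{\tilde u}[p^\infty]]$. Pushing out along $G[p^\infty]\to A[p^\infty]$ sends $[\cB]$ to $[\cB/W_{-2}\cB]$ and $[M^{\tilde u}[p^\infty]]$ to $[M_\ab[p^\infty]]$, which coincide by the previous paragraph; hence $\delta$ dies in $\Ext_R((\Q_p/\Z_p)^m,A[p^\infty])$, and the long exact sequence attached to $0\to T[p^\infty]\to G[p^\infty]\to A[p^\infty]\to0$ exhibits $\delta=\iota_*\delta'$ for some $\delta'\in\Ext_R((\Q_p/\Z_p)^m,\mu_{p^\infty}^d)$.

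Now I would invoke the toric Serre\,-Tate isomorphism. Applying Lemma \ref{l.tm} in the $dm$ coordinates (using that $\Ext_k((\Q_p/\Z_p),\mu_{p^\infty})=0$, the $R=k$ case of that lemma) gives a unique $t\in(1+\mm)^{dm}$ with trivial reduction and $[M^t[p^\infty]]=\delta'$, where $M^t=[t\colon\Z^m\to T]$; by additivity $u:=\tilde u+t$ satisfies $[M^u[p^\infty]]=[M^{\tilde u}[p^\infty]]+\iota_*\delta'=[\cB]$, so $M=[u\colon\Z^m\to G]$ lifts $M_0$ with $M[p^\infty]\cong\cB$. Uniqueness up to unique isomorphism is then immediate: $\Delta_R$ is already faithful (Corollary \ref{faith}) and full (Proposition \ref{full}), so any isomorphism $M[p^\infty]\by{\sim}\cB\by{\sim}M'[p^\infty]$ over the identity of $M_0[p^\infty]$ lifts to a unique isomorphism $M\by{\sim}M'$. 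The step I expect to be the main obstacle is the middle one, namely verifying that $\delta$ is \emph{purely toric}: one must match the weight-graded pieces of $\cB$ and of $M^{\tilde u}[p^\infty]$ carefully enough (as genuine extension classes with the same sub and quotient, via Lemma \ref{l.fil}) to land $\delta$ in the image of the torus, after which Lemma \ref{l.tm} finishes.
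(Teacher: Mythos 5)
Your proposal is correct and follows essentially the same route as the paper's proof: lift $G$ via Theorem \ref{t.stab} applied to $(G_0,W_{-1}\cB)$ and $M_{0,\ab}$ via Corollary \ref{c.ab}, produce an initial lift $u'$ of $u_\ab$ and $u_0$ using $H^1(R,\G_{m,R})=0$ and the surjectivity of $T(R)\to T(k)$, and then correct by a toric homomorphism supplied by Lemma \ref{l.tm}, with uniqueness from the already-established faithfulness and fullness. The only (harmless) reorganization is that you perform the correction directly at the $p^\infty$-level, using the long exact $\Ext$ sequence for $0\to T[p^\infty]\to G[p^\infty]\to A[p^\infty]\to 0$ and the isomorphism $1+\mm\simeq\mathrm{Ext}_R(\Q_p/\Z_p,\mu_{p^\infty})$, whereas the paper first matches the $p^n$-torsion via Lemma \ref{l.tm} and then kills the residual class in $\mathrm{Ext}_R((\Q_p/\Z_p)^m,\mu_{p^\infty}^d)$ by \eqref{e.4e} --- the same ingredients in a slightly different order.
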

\begin{proof} Uniqueness follows by Proposition \ref{p.inj}.
For the existence,  recall that by Lemma \ref{l.fil} $\cB$ is naturally filtered so that $W_{-1}\cB$ is a lifting of $G_{0}[p^\infty]$ and $\mathcal B/W_{-2}\mathcal B$ is a lifting of $M_{0,\ab}[p^\infty]$. By Theorem \ref{t.stab}  $G_0$ lifts  to an $R$-scheme $G$ which is extension of an abelian scheme $A$ by  $T\simeq \G_{m,R}^d$ and  $G[p^\infty]=W_{-1}\cB$; further  $M_{0,\ab}$ lifts to a $1$-motive $M_A=[u_A\colon \Z^m\to A]$ whose BT group is isomorphic to $\mathcal B/W_{-2}\mathcal B$ by Corollary \ref{c.ab}.

Let  $M'=[u'\colon \Z^m\to G]$  be any extension of $M_A$ by $T$; it exists since $H^1(R,\G_{m,R})=0$. Since $T(R)\to T(k)$ is surjective, we may assume that $u'$ is also a lifting of $u_0$. 
We are then left to alter $u'$ so that $M'[p^\infty]\simeq \mathcal B[p^\infty]$ in $\Ext_R((\Q_p/Z_p)^m,G[p^\infty])$.

Let  $\mathcal E=\mathcal B[p^n]-M'[p^n]$ as extension of  $(\Z/p^n\Z)^m$ by $G[p^n]$. 
Since the push-out along $ G[p^n]\to A[p^n]$ maps  $\mathcal E$ to the trivial extension of $(\Z/p^n\Z)^m$ by $A[p^n]$, there exists by Lemma \ref{l.tm} a morphism $v\colon \Z^m\to T$ such that $M=[u=u'-v\colon \Z^m\to G]$ is a lifting of $M_0$ and $M[p^n]$ is isomorphic to $\mathcal B[p^n]$.  Further $M[p^\infty]$ and $\mathcal B$ induce the same extension of $(\Q_p/\Z_p)^m$ by $A[p^\infty]$. Hence they differ by an $\eta\in \mathrm{Ext}_R((\Q_p/\Z_p)^m,\mu_{p^\infty}^d)$ which induces a split extension on the $p^n$-torsion subgroups. 
By \eqref{e.4e}  we conclude that $\eta=0$ and hence $M[p^\infty]\simeq \mathcal B$.\end{proof}

Let $k'/k$ be a finite Galois extension with Galois group $\Gamma$ and set $R'=R\otimes_{W(k)}W(k')$.  As remarked in Subsection \ref{gal}, if a $1$-motive $M'$ over $R'$ descends to a $1$-motive $M$ over $R$, then   $\Delta_{R'}(M')$ in  $\mathrm{Def}(R',k')$ descends to   $\Delta_R(M)$ in $\mathrm{Def}(R,k)$.  The next lemma shows that also the converse holds. 

\begin{lemma}\label{l.desc}
Let notation be as above. A $1$-motive $M'$ descends to $R$ if, and only if, its image in  $\mathrm{Def}(R',k')$ descends to $\mathrm{Def}(R,k)$.
\end{lemma}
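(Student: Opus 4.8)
The plan is to reduce the statement to the fact that the Serre\,-Tate functor is fully faithful over the ring $R'$. Since $R'$ is again artinian local with perfect residue field $k'$ of characteristic $p$, Corollary \ref{faith} and Proposition \ref{full} apply verbatim over $R'$ and show that $\Delta_{R'}$ is fully faithful. The ``only if'' implication is precisely the observation recorded just above the statement: if $M'$ descends to a $1$-motive $M$ over $R$, then $\Delta_{R'}(M')$ descends to $\Delta_R(M)$, because $\Delta_{R'}$ commutes with the $\Gamma$-action (Subsection \ref{gal}). So only the converse requires an argument.

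For the converse I would reformulate ``descends'' through descent data. By the effectivity of Galois descent established in Subsection \ref{gal}, a $1$-motive $M'$ descends to $R$ if and only if it carries a family of isomorphisms $\varphi_\sigma\colon M'\by{\sim}\sigma^*M'$ in $\M(R')$, $\sigma\in\Gamma$, satisfying the cocycle condition $\varphi_{\sigma\tau}=\sigma^*(\varphi_\tau)\circ\varphi_\sigma$; likewise $\Delta_{R'}(M')$ descends to $\mathrm{Def}(R,k)$ if and only if it carries such a family $\{\Psi_\sigma\}$. Assume the latter. Using the canonical identification $\sigma^*\Delta_{R'}(M')=\Delta_{R'}(\sigma^*M')$ afforded by the compatibility of $\Delta_{R'}$ with the Galois action, fullness lets me write $\Psi_\sigma=\Delta_{R'}(\varphi_\sigma)$ for a unique morphism $\varphi_\sigma\colon M'\to\sigma^*M'$; since each $\Psi_\sigma$ is invertible and $\Delta_{R'}$ is fully faithful, each $\varphi_\sigma$ is an isomorphism.

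It then remains to transport the cocycle identity. Applying $\Delta_{R'}$ and using again its compatibility with $\sigma^*$, I compute
\[\Delta_{R'}\bigl(\sigma^*(\varphi_\tau)\circ\varphi_\sigma\bigr)=\sigma^*(\Psi_\tau)\circ\Psi_\sigma=\Psi_{\sigma\tau}=\Delta_{R'}(\varphi_{\sigma\tau}),\]
so that $\sigma^*(\varphi_\tau)\circ\varphi_\sigma=\varphi_{\sigma\tau}$ by faithfulness, and $\{\varphi_\sigma\}$ is a genuine descent datum on $M'$. By effectivity, $M'$ descends to $R$, which proves the converse. The whole argument is formal once full faithfulness of $\Delta_{R'}$ is available; the only points needing care are the bookkeeping identification $\sigma^*\Delta_{R'}(M')=\Delta_{R'}(\sigma^*M')$, so that the two descent data live in matching categories, and the fact that fullness upgrades the $\Psi_\sigma$ to isomorphisms $\varphi_\sigma$. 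I expect no serious obstacle here, since the genuinely substantial input, namely the effectivity of descent for the constituents $L'$, $G'$ and their weight pieces, has already been carried out in Subsection \ref{gal}.
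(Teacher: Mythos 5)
Your proof is correct and follows essentially the same route as the paper's: both use the full faithfulness of $\Delta_{R'}$ (valid because $R'$ is again artinian local with perfect residue field $k'$) to upgrade the descent data on the image in $\mathrm{Def}(R',k')$ to isomorphisms $\varphi_\sigma\colon M'\by{\sim}\sigma^*M'$, and then invoke the effectivity of Galois descent for $L'$, $G'$ and $u'$ established in Subsection \ref{gal}. The only difference is cosmetic: you verify the cocycle condition explicitly via faithfulness, a step the paper leaves implicit when asserting that the $\varphi_\sigma$ ``define an action of $\Gamma$ on $M'$.''
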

\begin{proof}
Let $M'=[u'\colon L'\to G']$ and assume that $(M'_0,\mathcal B'=M'[p^\infty],\mathrm{can}\colon \mathcal B'_0\by{\sim} M'_0[p^\infty] )$ descends to an object $(M_0=[L_0\to G_0],\mathcal B, \varepsilon_0\colon \mathcal B_0\by{\sim}M_0[p^\infty] )  )$ in $\Def$. For any $\sigma\in \Gamma$ we then have an isomorphism in $\mathrm{Def}(R',k')$
\[
(\varphi_{\sigma,0},\psi_\sigma)\colon \left(M'_0,\mathcal B',\mathrm{can}\right)\by{\sim} \left(\sigma^*M'_0,\sigma^*\mathcal B',\sigma^*\mathrm{can}\right),
  \]
  where  \[\varphi_{\sigma,0}\colon M'_0\by{\sim}\sigma^* M'_0, \qquad
\psi_\sigma\colon \mathcal B'\by{\sim} \sigma^* \mathcal B',
\]
make the following diagram
\[\xymatrix@R=0.6cm{\mathcal B'_0\ar[rr]^{\psi_{\sigma,0}}\ar[d]_{\mathrm{can}}& & \sigma^* \mathcal B'_0\ar[d]^{\sigma^*\mathrm{can}}\\
 M'_0[p^\infty]\ar[rr]^{\varphi_{\sigma,0}[p^\infty]}&&\sigma^* M'_0[p^\infty]
}\]
commute. By the fully faithfulness of \eqref{st} the isomorphism $(\varphi_{\sigma,0},\psi_\sigma)$ gives a unique isomorphism $\varphi_\sigma\colon  M'\by{\sim}\sigma^* M'$ which lifts $\varphi_{\sigma,0}$ and restricts to $\psi_\sigma$ on BT groups. 
Hence we have defined an action of $\Gamma$ on $M'$, in particular on $L'$ and on $G'$.
Since $L'$ is \'etale over $k'$, it    descends to a lattice $L$ whose special   fiber is $L_0$. On the other hand,  the restriction of the $\Gamma$-action on   weight $\leq -1$  gives a  $G$ in $\M(R)_{\leq -1}$ by Theorem \ref{t.stab}. Finally, since $u'$ is $\Gamma$-equivariant, it descends to a morphism $u\colon L\to G$.  By construction now the $1$-motive $M'$ descends to $M=[u\colon L\to G]$ and the image of $M$ in $\Def$ is  $(M_0,\mathcal B, \varepsilon_0  )$.
\end{proof}

\begin{propose}\label{surj}
The functor \eqref{st} is essentially surjective.
\end{propose}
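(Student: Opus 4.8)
The plan is to build the required $1$-motive first over a finite Galois extension, where the lattice is constant and the maximal torus is split, and then to bring it down by descent. Let $(M_0,\cB,\varepsilon_0)$ be an object of $\Def$, with $M_0=[u_0\colon L_0\to G_0]$ and maximal subtorus $T_0\subseteq G_0$. First I would choose a finite Galois extension $k'/k$ large enough that the \'etale lattice $L_0$ becomes constant, isomorphic to $\Z^m$, and $T_0$ becomes split, isomorphic to $\G_{m,k'}^d$; such a $k'$ exists since each of $L_0$ and $T_0$ splits over some finite (separable, as $k$ is perfect) extension, and one passes to the Galois closure of their compositum. Set $\Gamma=\mathrm{Gal}(k'/k)$ and $R'=R\otimes_{W(k)}W(k')$. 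As noted in Subsection \ref{gal}, $R'$ is again an artinian local ring with perfect residue field $k'$ of characteristic $p$, so every result established so far, in particular the faithfulness (Corollary \ref{faith}) and fullness (Proposition \ref{full}) of the Serre\,-Tate functor and Lemma \ref{l.lz}, applies verbatim over $R'$.

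Next I would base change $(M_0,\cB,\varepsilon_0)$ along $k'/k$ on the first and third entries and along $R'/R$ on the second, obtaining an object $(M'_0,\cB',\varepsilon'_0)$ of $\mathrm{Def}(R',k')$ in which $M'_0=[u'_0\colon \Z^m\to G'_0]$ now has split maximal subtorus $\G_{m,k'}^d$. By Lemma \ref{l.lz} there is a $1$-motive $M'$ over $R'$, unique up to unique isomorphism, which lifts $M'_0$ and whose BT group is isomorphic to $\cB'$; equivalently $\Delta_{R'}(M')\cong (M'_0,\cB',\varepsilon'_0)$ in $\mathrm{Def}(R',k')$.

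Finally I would descend. Since $(M'_0,\cB',\varepsilon'_0)$ is by construction the base change of the object $(M_0,\cB,\varepsilon_0)$ already defined over $R$, it descends to $\mathrm{Def}(R,k)$. The image of $M'$ in $\mathrm{Def}(R',k')$ is $(M'_0,\cB',\varepsilon'_0)$, so the hypothesis of Lemma \ref{l.desc} is met and $M'$ descends to a $1$-motive $M$ over $R$; the concluding assertion of that lemma identifies the image of $M$ in $\Def$ with $(M_0,\cB,\varepsilon_0)$. This produces the desired preimage and proves essential surjectivity.

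I expect the main subtlety to be the bookkeeping of the descent datum rather than any new geometric input: one must check that the isomorphism $\Delta_{R'}(M')\cong(M'_0,\cB',\varepsilon'_0)$ furnished by Lemma \ref{l.lz} is compatible with the canonical $\Gamma$-action carried by $(M'_0,\cB',\varepsilon'_0)$ as a base change, so that Lemma \ref{l.desc} genuinely applies. Here the uniqueness clause in Lemma \ref{l.lz}, together with the full faithfulness of $\Delta_{R'}$ invoked inside Lemma \ref{l.desc}, does the real work: it forces the descent isomorphisms $\varphi_\sigma\colon M'\by{\sim}\sigma^*M'$ to exist, to be unique, and hence to satisfy the cocycle condition automatically, so no further verification is needed.
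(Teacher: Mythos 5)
Your proof is correct and takes essentially the same route as the paper, which likewise uses Lemma \ref{l.desc} to reduce to the case where $L_0$ is constant and the maximal subtorus of $G_0$ is split, and then concludes by Lemma \ref{l.lz}. Your closing remarks about the descent datum simply unpack machinery already contained in the proof of Lemma \ref{l.desc} (full faithfulness forcing existence, uniqueness, and the cocycle condition for the $\varphi_\sigma$), so no extra verification is needed.
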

\begin{proof}
   Let $(M_0=[L_0\to G_0],\mathcal B, \varepsilon_0)$ be an object of $\Def$.
Thanks to Lemma \ref{l.desc} we may assume that $L_0$ is constant and that the maximal subtorus of $G_0$ is split. The conclusion follows then by Lemma \ref{l.lz}.
\end{proof}

With this proposition the proof of Theorem \ref{t.st} is completed.  We should add that some results on deformations of $1$-motives were proved, with different methods, in Madapusi's thesis \cite{MS}, but were not published later.
\medskip

Since any  extension of an \'etale BT group by a toroidal BT group is split over $k$, we deduce the following generalization of Lemma \ref{l.tm}.

\begin{cor} 
Let $T$ be an $R$-torus and $L$ a lattice. For any $1$-motive $M_0=[u_0\colon L_0\to T_0]$ and any BT group $\mathcal B$ which is extension of $L[p^\infty]$ by $T[p^\infty]$ there is a unique $1$-motive $M=[u\colon L\to T]$  which lifts $M_0$ and whose BT group is isomorphic to $\mathcal B$. 
\end{cor}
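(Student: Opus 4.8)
The plan is to read off existence from the equivalence already proved in Theorem~\ref{t.st} and then to pin down the precise shape of the lifting using the rigidity of tori and of \'etale lattices; uniqueness will be immediate from full faithfulness. So the two ingredients are the essential surjectivity/full faithfulness of $\Delta_R$ and the fact that a semi-abelian deformation of a torus is again a torus.

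First I would dispose of uniqueness. If $M_1=[u_1\colon L\to T]$ and $M_2=[u_2\colon L\to T]$ both lift $M_0$ and carry a BT group isomorphic to $\cB$ compatibly with the reductions, then $\Delta_R(M_1)$ and $\Delta_R(M_2)$ are both isomorphic to $(M_0,\cB,\varepsilon_0)$ in $\Def$. By the full faithfulness part of Theorem~\ref{t.st} there is then a unique isomorphism $M_1\by{\sim}M_2$ lifting the identity, which is exactly uniqueness up to unique isomorphism.

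For existence, the triple $(M_0,\cB,\varepsilon_0)$ with $M_0=[u_0\colon L_0\to T_0]$ is an object of $\Def$, so by essential surjectivity (Proposition~\ref{surj}) there is a $1$-motive $M=[v\colon L'\to G']$ over $R$ with $\Delta_R(M)\cong(M_0,\cB,\varepsilon_0)$; in particular $G'_0\cong T_0$ and $L'_0\cong L_0$. The point is that $M$ is automatically of the prescribed form. Since $G'$ is semi-abelian with $G'_0\cong T_0$ a torus, its abelian quotient has trivial special fibre, hence is zero (an abelian scheme over the artinian local ring $R$ with zero special fibre is zero), so $G'$ is a torus lifting $T_0$. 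As tori and \'etale lattices lift uniquely from $k$ to $R$ (the \'etale sites over $R$ and $k$ agree, and a torus is determined by its character sheaf), the canonical identifications $G'\cong T$ and $L'\cong L$ transport $v$ to the desired $u\colon L\to T$, yielding $M=[u\colon L\to T]$.

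Alternatively one can reprove this intrinsically, as a genuine generalization of Lemma~\ref{l.tm} not invoking the full equivalence: by Lemma~\ref{l.desc} one reduces, after base change to $R'=R\otimes_{W(k)}W(k')$ for a suitable finite Galois $k'/k$, to the case where $L$ is constant and $T$ split. Then $M_0=[u_0\colon \Z^m\to \G_{m,k}^d]$ has semi-abelian part a split torus, so the hypotheses of Lemma~\ref{l.lz} are met and it produces the required $M$ directly, with semi-abelian part forced to be $T$ since there is no abelian contribution. In both routes the one step requiring care is the same, and is the main (mild) obstacle: ensuring that the abstract lifting carries precisely the given torus $T$ and lattice $L$ rather than merely isomorphic ones, which rests on the uniqueness of torus and \'etale-lattice deformations. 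The splitness over $k$ of any extension of an \'etale BT group by a toroidal one (which holds since $\mu_{p^\infty}(k)=0$) confirms that we are in the purely toric deformation regime already handled by Lemmas~\ref{l.tm} and \ref{l.lz}, and is what makes the statement a clean analogue of the rank-one case.
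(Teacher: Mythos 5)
Your overall route is the one the paper intends: the corollary appears there without a separate proof, deduced in one line from Theorem \ref{t.st} together with the remark that any extension of an \'etale BT group by a toroidal BT group is split over $k$; your existence argument (essential surjectivity plus rigidity of tori and \'etale lattices, with the abelian quotient of the lifting vanishing because its special fibre does) and your uniqueness argument via full faithfulness both implement this correctly, and your alternative reduction via Lemmas \ref{l.desc} and \ref{l.lz} is also viable.

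There is, however, one genuine wiring defect. The hypotheses of the corollary supply no isomorphism $\varepsilon_0\colon \cB_0\by{\sim} M_0[p^\infty]$ --- only that $\cB$ is an extension of $L[p^\infty]$ by $T[p^\infty]$ over $R$ --- so you cannot simply assert that ``the triple $(M_0,\cB,\varepsilon_0)$ is an object of $\Def$'': the map $\varepsilon_0$ must first be constructed, and this is exactly where the splitness over $k$ is needed, not as a concluding ``confirmation'' as you use it. Concretely: reducing mod $\mm$, $\cB_0$ is an extension of $L_0[p^\infty]$ by $T_0[p^\infty]$, as is $M_0[p^\infty]$ by \eqref{e.pi}; since both are extensions of an \'etale BT group by a toroidal one over the perfect field $k$, both split (canonical connected--\'etale decomposition, or equivalently $\mathrm{Ext}_k(\Q_p/\Z_p,\mu_{p^\infty})\simeq 1+\mm=\{1\}$ via \eqref{e.psi} applied with $R$ replaced by $k$), whence an isomorphism $\varepsilon_0$, even one of extensions, exists. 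This is the single substantive input beyond Theorem \ref{t.st}, and without it the first step of your existence argument is undefined; the same point recurs in your second route, since Lemma \ref{l.lz} requires $\cB$ to lift $M_0[p^\infty]$, i.e.\ again presupposes $\varepsilon_0$. A minor further quibble: your justification ``$\mu_{p^\infty}(k)=0$'' is loose --- over a general perfect $k$ the torus need not be split and the lattice need not be constant, so the clean argument is the connected--\'etale splitting of BT groups over a perfect field. With $\varepsilon_0$ supplied at the start, the remainder of your proof is sound and agrees with the paper's.
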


\subsection{Serre\,-Tate moduli for $1$-motives}\label{moduli}
Let now $k$ be an algebraically closed field of characteristic $p>0$.  Following  \cite[\S~2]{Ka}  one can define the formal moduli of a given $1$-motive $M_0$ over $k$.  Namely let $\widehat{\mathscr{M}}_{M_0}=\widehat{\mathscr{M}}$ be the functor 
\[
\widehat{\mathscr{M}}(R):=\{R\text{-liftings of }M_0\} / \text{iso}
\]
where $R$ is a local artin ring with residue field $k$.  By Theorem~\ref{t.st}  we get a bijection
\[
\widehat{\mathscr{M}}(R)=\{R\text{-liftings of }M_0[p^\infty]\} / \text{iso}\ .
\]

If $M_0=A_0$ is an ordinary abelian variety a construction of Serre\,-Tate gives an isomorphism of functors
\[
\widehat{\mathscr{M}}(-)\simeq \Hom_{\Z_p}(T_pA_0(k)\otimes T_pA_0^*(k),\widehat{\mathbb{G}}_{\rm m}(-))
\]
where $\widehat{\mathbb{G}}_{\rm m}(R)=1+\mathfrak{m}$ are the principal units of $R$ and $A^*_0$ is the dual abelian variety.

This result  can be naturally extended to $1$-motives as follows.

\begin{propose}
Let $k$ be an algebraically closed field of characteristic $p>0$. Let $M_0$ be a $1$-motive over $k$ such that $\gr_{-1}M_0=A_0$ is an ordinary abelian variety. Let $(R,\mathfrak{m})$ be an artinian local ring with residue field $k$.
\begin{itemize}
	\item[(i)] For any  $1$-motive $M$ over $R$ lifting $M_0$ there exist a canonical $\Z_p$-bilinear form
	\[
		q(M/R,-,-)\colon T_pM_0(k)\otimes T_pM_0^*(k)\to \widehat{\mathbb{G}}_{\rm m}(R)
	\]
	inducing an isomorphism of functors
\[
	\widehat{\mathscr{M}}(-)\simeq \Hom_{\Z_p}(T_pM_0(k)\otimes T_pM_0^*(k),\widehat{\mathbb{G}}_{\rm m}(-))
\]
where $T_pM_0(k):=\varprojlim_n M_0[p^n](k)$.
\item[(ii)] Let $\varphi_0\colon M_0\to N_0$ be a morphism of $1$-motives over $k$ such that $\gr_{-1}M_0,\gr_{-1}N_0$ are ordinary abelian varieties. Let $M$ and $N$ be  liftings over $R$  of $M_0$ and $N_0$, respectively. Then $\varphi_0$ lifts to an $R$-morphism $\varphi\colon M\to N$ if, and only if,
\[
	q(M/R,\alpha,\varphi^*_{0}(\beta^*))=q(N/R,\varphi_{0}(\alpha),\beta^*)
\]
for every $\alpha\in T_pM_0(k)$ and $\beta^*\in T_pM_0^*(k)$. Further, is a lifting exists, it is unique.
\end{itemize} 
\end{propose}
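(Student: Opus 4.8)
The plan is to deduce everything from the global equivalence of Theorem~\ref{t.st} together with the rigidity of \'etale and multiplicative Barsotti--Tate groups, exactly as in the classical abelian case but keeping track of the extra weights. First I would record the shape of the $p$-divisible group in the ordinary situation. Since $\gr_{-1}M_0=A_0$ is ordinary and $T_0$ is a torus, the connected part of $A_0[p^\infty]$ is toroidal, so the weight filtration of $M_0[p^\infty]$ refines to a connected--\'etale sequence
\[
0\to \mathcal E^0\to M_0[p^\infty]\to \mathcal E^{\et}\to 0,\qquad \mathcal E^0\simeq \mu_{p^\infty}^{d+g},\quad \mathcal E^{\et}\simeq (\Q_p/\Z_p)^{g+m},
\]
where $d=\dim T_0$, $g=\dim A_0$ and $m$ is the rank of $L_0$ (the torus contributes to $\mathcal E^0$ and the lattice to $\mathcal E^{\et}$). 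By rigidity of \'etale and of multiplicative Barsotti--Tate groups over the artinian local ring $R$, both $\mathcal E^0$ and $\mathcal E^{\et}$ admit unique liftings to $R$, and any lifting $M[p^\infty]$ of $M_0[p^\infty]$ is therefore classified, up to unique isomorphism, by its extension class $\mathrm{ext}(M)\in\Ext_R(\mathcal E^{\et},\mathcal E^0)$.

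For part (i) I would then turn this extension class into the bilinear form. Taking physical Tate modules over the algebraically closed field $k$ annihilates the multiplicative parts, so $T_pM_0(k)=T_p\mathcal E^{\et}(k)$, while Cartier duality identifies the \'etale part of $M_0^*[p^\infty]=(M_0[p^\infty])^\vee$ with the Cartier dual BT group $(\mathcal E^0)^\vee$, whence $T_pM_0^*(k)=T_p\big((\mathcal E^0)^\vee\big)(k)$. Writing $\mathcal E^0$ and $\mathcal E^{\et}$ as the above powers, the group $\Ext_R(\mathcal E^{\et},\mathcal E^0)$ splits as a product of copies of $\Ext_R(\Q_p/\Z_p,\mu_{p^\infty})$, which by \eqref{e.psi} is canonically $1+\mm$; repackaging the indexing intrinsically through the two Tate modules gives a canonical isomorphism
\[
\Ext_R(\mathcal E^{\et},\mathcal E^0)\;\by{\sim}\;\Hom_{\Z_p}\!\big(T_pM_0(k)\otimes T_pM_0^*(k),\,1+\mm\big).
\]
Defining $q(M/R,-,-)$ to be the image of $\mathrm{ext}(M)$ under this map, and combining the displayed bijections with Theorem~\ref{t.st} (which identifies $\widehat{\mathscr M}(R)$ with isomorphism classes of liftings of $M_0[p^\infty]$), yields the asserted isomorphism of functors; naturality in $R$ is automatic since base change, $\Ext$ and $\Psi$ are all functorial in $R$.

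For part (ii) the morphism criterion is obtained by running the same dictionary on $\Hom$'s. By the full faithfulness in Theorem~\ref{t.st}, $\varphi_0$ lifts to $\varphi\colon M\to N$ if and only if $\varphi_0[p^\infty]$ lifts to a homomorphism $M[p^\infty]\to N[p^\infty]$ of the chosen deformed Barsotti--Tate groups, and such a lift is then unique by faithfulness (Corollary~\ref{faith}). Now $\varphi_0[p^\infty]$ respects the connected--\'etale decomposition and induces $T_p\varphi_0$ on \'etale parts and, through Cartier duality, $T_p\varphi_0^*$ on the dual of the connected parts; both of these rigid pieces lift canonically. Hence a lift exists precisely when it is compatible with the two extension classes, i.e.\ when the pushout of $\mathrm{ext}(M)$ along the connected part of $\varphi_0[p^\infty]$ equals the pullback of $\mathrm{ext}(N)$ along its \'etale part in $\Ext_R(\mathcal E^{\et}_M,\mathcal E^0_N)$. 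Translating pushout and pullback through the isomorphism of part (i)---pushout on $\mathcal E^0$ becomes precomposition of the second slot with $\varphi_0^*\colon T_pN_0^*(k)\to T_pM_0^*(k)$, and pullback on $\mathcal E^{\et}$ becomes precomposition of the first slot with $\varphi_0$ on $T_pM_0(k)$---turns this compatibility into exactly the stated identity $q(M/R,\alpha,\varphi_0^*(\beta^*))=q(N/R,\varphi_0(\alpha),\beta^*)$ for all $\alpha\in T_pM_0(k)$ and $\beta^*\in T_pN_0^*(k)$.

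The conceptual content is light once the framework is set up; the work is all bookkeeping. The step I expect to require the most care is getting the variances right in the Cartier-duality identifications---verifying that the second slot of $q$ really is $T_p\big((\mathcal E^0)^\vee\big)(k)=T_pM_0^*(k)$ and that pushout along the connected part dualizes to precomposition with $\varphi_0^*$---and, relatedly, pinning down the canonical isomorphism $\Ext_R(\mathcal E^{\et},\mathcal E^0)\simeq\Hom_{\Z_p}(T_pM_0(k)\otimes T_pM_0^*(k),1+\mm)$ so that it is manifestly $\Z_p$-bilinear and independent of the chosen splittings $\mathcal E^0\simeq\mu_{p^\infty}^{d+g}$ and $\mathcal E^{\et}\simeq(\Q_p/\Z_p)^{g+m}$. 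It is also worth emphasizing that ordinariness of $A_0$ enters precisely here: it is what makes $\mathcal E^0$ multiplicative, hence rigid, so that the whole deformation collapses onto a single extension class.
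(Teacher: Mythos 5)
Your proposal is correct and takes essentially the same route as the paper: both reduce via Theorem \ref{t.st} to deformations of $M_0[p^\infty]$, use ordinariness to make the connected part multiplicative and hence rigid along with the \'etale part, classify liftings by an extension class, and convert that class into the bilinear form $q$ through duality --- the paper makes precise exactly the ``intrinsic repackaging'' you flag as the delicate step, by citing Messing's computation $\Ext(M_0[p^\infty]^{\et},M_0[p^\infty]^\circ)\simeq\Hom_{\Z_p}(T_pM_0(k),M_0[p^\infty]^\circ(R))$ together with the Poincar\'e-biextension pairing $E_M$, setting $q(M/R,\alpha,\beta)=E_M(\phi_M(\alpha),\beta)$, and then following Katz word for word. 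Your treatment of part (ii), including the implicit correction that $\beta^*$ should range over $T_pN_0^*(k)$, likewise matches the classical argument the paper defers to.
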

\begin{proof}
The proof goes exactly as in the classical case using the following two ingredients (cf. \cite[p. 152]{Ka}):
\begin{itemize}
	\item the Poincar\'e biextension gives a perfect pairing $M[p^n]\times M^*[p^n]\to \mu_{p^n}$ inducing an isomorphism of functors
	\[
		M_0[p^\infty]^\circ \xrightarrow{\sim }\Hom_{\Z_p}(T_pM_0^*(k),\widehat{\mathbb{G}}_m)\ ,
	\]
	since $M_0[p^\infty]^\et=T_pM_0(k)\otimes_{\Z_p}(\Q_p/\Z_p)$.
	 We denote by $E_M\colon M_0[p^\infty]^\circ \times T_pM_0(k)\to\widehat{\mathbb{G}}_m$ the corresponding pairing.

	\item Since $k$ is algebraically closed,  the BT groups $M_0[p^\infty]^\circ$ and $M_0[p^\infty]^\et$ are split. Thus by \cite[Proposition 2.5 p. 180]{M} 
	\[
		\Ext^1(M_0[p^\infty]^\et,M_0[p^\infty]^\circ)\simeq\Hom_{ \Z_p}(T_pM(k),M_0[p^\infty]^\circ(R))
	\]
	and there exists a unique $\phi_M\in \Hom_{ \Z_p}(T_pM(k),M_0[p^\infty]^\circ(R))$ associated to the isomorphism class of the BT group $M_0[p^\infty]$.
\end{itemize}	

Then we can define $q(M/R,\alpha,\beta):= E_M(\phi_M(\alpha),\beta)$ and follow  word by word the proof in \cite{Ka}.
\end{proof}


\vfill

\end{document}